\documentclass[11pt,reqno,twoside,a4paper]{article}
\usepackage{mysty}
\usepackage{wrapfig}
\usepackage{fullpage}

\flushbottom

\title{Differentiation of inertial methods for optimizing smooth parametric function}

\author{Jean-Jacques Godeme\thanks{ INRIA Sophia  \& Universit\'e C\^ote d’Azur, CNRS, LJAD, France. \underline{e-mail:}\texttt{jeanjacquesgodeme@gmail.fr}} } 
  
\date{}

\begin{document}

\maketitle
\vspace*{-1.02cm}
\begin{center}
	\textit{In honor, of the 90th Birthday of Professor emiritus R. Tyrrell Rockafellar for all his contributions to
		mathematics and his role as an inspiration to younger generations.}
\end{center}

\begin{abstract}
In this paper, we consider the minimization of a $C^2-$smooth  and strongly  convex objective depending on a given parameter, which is usually found in many practical applications. We suppose that we desire to solve the problem with some  inertial methods which cover a broader existing well-known inertial methods. Our main goal is to analyze  the derivative of this algorithm as an infinite iterative process in  the sense of ``automatic'' differentiation. This procedure is  very common and has recently gained more attention. From a pure optimization perspective and under some mild premises, we show that  any sequence generated by theses inertial methods converge to the unique minimizer of the problem, which depends on the parameter. Moreover,  we show a local linear convergence rate of the generated sequence. Concerning the differentiation of the scheme, we prove that  the derivative of the sequence with respect to the parameter converges to the derivative of the limit of the sequence showing that it is <<derivative stable>>. Finally, we investigate  the rate at  which the convergence occurs. We show that, it is locally linear with an error term tending to zero. 
\end{abstract}

\begin{keywords} Differentiation,  Inertial methods, smooth function,  strongly convex functions.
\end{keywords}
\section{Introduction}\label{sec-intro}

\subsection{Problem Statement \& Prior work}
In this paper, we study derivatives of an iterative process, in the ``automatic'' differentiation sense, to solve a parametric optimization problem. This approach is  one the most popular method to differentiate an  iterative algorithm generated by a computer.  Differentiating algorithms has been properly introduced in the 90's see  \cite{gilbert92,beck1994} for instance and gained recently a lot of attention within applications such as machine learning precisely hyperparameter optimization, metalearning \etc. We refer the interested reader to  \cite{griewank1993,griewank2007} for a more detailed introduction and to \cite{baydin2018} for a recent review to  \cite{iutzeler24} for the stochastic case, to \cite{bolte22,bertrand2020} for extensions to the nonsmooth setting  and the following  thesis \cite{vaiter2021}, for applications in seismic tomography \cite{liu_multimodal_2024} and reference therein.

 It is well known that for a huge amount of smooth program, the derivatives can be obtained using  the rule of derivation of  composed functions. Therefore, the main goal is to know whether or not if we have convergence of the derivatives of the iterates to the derivative of the solution of the optimization problem. The standard approach in the smooth case is to use the well-known implicit function theorem which is our concern in this paper. To be more precise, we consider the following parametric-optimization problem 
\begin{equation}\label{eq:param-optim-intro}\tag{$\calP_{\theta}$}
\min_{x\in\bbR^n}f(x,\theta)
\end{equation}
where $\theta\in\Theta \subseteq\bbR^m$ is a parameter in the problem and the objective function is smooth enough. One of the most common way to solve this problem is to use the gradient-descent  like methods or accelerated version which are known to be inertial methods (\cite{polyak_methods_1964,nesterov_method_1983,liang_activity_2017,marumo_parameter_2024}). The natural questions which come and are the core of our analysis are the following: 

\textit{Do we have convergence of the derivatives of the inertial methods to the derivatives of the solution? And if the answer is positive, can we quantify this convergence rates ?}

Despite its conceptual simplicity, the answers to these questions are not as trivial as it may seem and requires a  careful analysis before apply any differentiation toolboxes. We recall that our motivations for studying theses questions come directly from the large range of  applications  as mentioned above. The second is that, they are no proper answer to this questions to the best of our knowledge. We hope that our tentative of answer clarifies the questions and propose a good theoretical mathematical framework and background to the daily users of  differentiation methods. 

\subsection{Contributions}
In this work, we start by analyzing  the parametric optimization problem \eqref{eq:param-optim-intro} from a pure optimization perspective. We provide  conditions, on which the optimization problem has at least one solution and under strong convexity with respect to the first variable, \eqref{eq:param-optim-intro} has actually  a unique minimizer for all fixed $\theta\in\Theta$. We then consider a set of inertial methods, for solving smooth enough optimization problem. This inertial methods can be written in very concise form in a higher dimension by defining an appropriate mapping. We study the properties of this mapping, we show that this mapping is continuously differentiable, moreover it is also Lipschitz continuous. We turn to provide a  global convergence analysis to the unique minimizer of the problem under some mild premise on the inertial parameters and the stepsizes. To properly understand the behaviour of this method, we also analyze  the local convergence rate of this schemes and it turns out that this convergence rates is linear. Since, our main purpose of the paper is to differentiate this inertial method, we provide a clear and concise analysis of the derivative of the inertial methods with respect to the parameters. Firstly, we use the rule of derivation of composed functions to get an explicit formula of the derivative of the inertial methods. Then, we show that  since, the sequence globally converges to the minimizer thus under some premises that using a  standard technique  the sequence of the derivative also converges to the derivative of the limit point which is the minimizer of the problem. Finally, we provide a  convergence rates analysis of this phenomenon. Indeed, we show that we have a local linear convergence with an error term which tends to zero  as  the number of iteration $k$ tends to infty. The most important part is that we do not suppose the Lipschitz continuity of the second order derivative like most work in the literature of differentiation of algorithm for minimizing  $C^2-$smooth function. We think that this hypothesis was too restrictive and do not cover most of practical applications. 

\subsection{Paper organization}

We organized the rest of this paper  as follows. In Section~\ref{sec:opt-prblm}, we consider the parametric optimization as a pure optimization problem  and we provide a full analysis of the convergence, global and local behaviour of any sequence generated by our inertial method to the unique minimizer of the problem under some mild premise.  Section~\ref{sec:ad-ima}  is devoted to  the proper analysis of the  differentiation of our inertial methods for solving  the parametric-optimization problem. We  illustrate most of our results with numerical experiments in  Section~\ref{sec_numexp}. Finally, we postpone in Appendix some technicals proofs, remarks and a toolbox on  differentiation that we use throughout our paper.



\section{ Optimization problem of interest}\label{sec:opt-prblm}
\subsection{Existence result}\label{subsec:exist}
We recall  that we want to solve the following optimization problem \eqref{eq:param-optim}
\begin{equation*}\tag{$\calP_{\theta}$} \label{eq:param-optim}
\min_{x\in\bbR^n}f(x,\theta)
\end{equation*}
where $\theta\in\Theta \subseteq\bbR^m.$  We have the following premises on the function that we want to optimize. 

\begin{premise}{\ }\label{prem_A}
\begin{enumerate}[label=(A.\arabic*)]
 \item \label{prem_A1} $f$ is a $C^2-$smooth  function with respect to both $x,\theta$,
 \item\label{prem_A2}  the gradient $\nabla_x f$ is $L-$Lipschitz continuous in both $x,\theta$, 
 \item \label{prem_A3}  $\forall\theta\in\Theta,$  the function  $x \mapsto f(x,\theta)$ is  level bounded \ie  all its sublevel sets are bounded. 
\end{enumerate}
\end{premise}
\begin{remark}
Premise~\ref{prem_A1} is a bear  requirement  since our main target is to differentiate a first-order method in the whole space. While Premise~\ref{prem_A2} implies that the operators $\nabla_{xx}^2f$ and $\nabla_{x\theta}^2f$  have upper bounds at each points. Finally, Premise~\ref{prem_A3} ensures that the minimization problem \eqref{eq:param-optim} has at least one minimizer for each $\theta\in\Theta$. It corresponds to having the property that $\forall \theta\in\Theta, \quad f(x,\theta)\to\infty$ as $\normm{x}\to\infty.$ 
\end{remark}

As an unconstraint optimization problem, the next question that arises is whether or not this problem has a solution. It turns out that under our given Premise~\ref{prem_A}, the answer is true. 
\begin{lemma}[Existence]\label{lem:exist} Let us consider  the problem \eqref{eq:param-optim} forall $\theta\in\Theta.$ Under the Premise~\ref{prem_A}, we have
\begin{enumerate}
\item\label{lem:exist_1} $\forall\theta\in\Theta,$ the minimization problem \eqref{eq:param-optim}  has a solution \ie $\argmin_x f(x,\theta)\neq\emptyset$. 
\item\label{lem:exist_2} Moreover,  if $\forall\theta\in\Theta,$ the function $x\mapsto f(x,\theta)$ is strongly convex then $\argmin_x f(x,\theta)$ is a singleton. 
\end{enumerate}
\end{lemma}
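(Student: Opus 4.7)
For part \ref{lem:exist_1}, the plan is to invoke the classical Weierstrass extreme value theorem on a compact sublevel set. Fix $\theta \in \Theta$ and choose any reference point $x_0 \in \bbR^n$. I would introduce the sublevel set
\[
S_\theta := \set{x \in \bbR^n : f(x,\theta) \leq f(x_0,\theta)},
\]
which is nonempty since $x_0 \in S_\theta$. By Premise~\ref{prem_A3}, $S_\theta$ is bounded, and by Premise~\ref{prem_A1}, $x \mapsto f(x,\theta)$ is continuous, so $S_\theta$ is closed; hence $S_\theta$ is compact. Since any minimizer of $f(\cdot,\theta)$ over $\bbR^n$ must lie in $S_\theta$, it suffices to minimize the continuous function $x \mapsto f(x,\theta)$ over the compact set $S_\theta$, which is achieved by Weierstrass. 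Therefore $\argmin_x f(x,\theta) \neq \emptyset$.

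For part \ref{lem:exist_2}, I would rely on the standard fact that strong convexity implies strict convexity. Having established existence in part \ref{lem:exist_1}, I only need to rule out two distinct minimizers. Suppose for contradiction that $x_1 \neq x_2$ are both minimizers. Strong convexity of $x \mapsto f(x,\theta)$ (with some modulus $\mu > 0$) yields, for any $t \in (0,1)$,
\[
f(t x_1 + (1-t) x_2, \theta) \leq t f(x_1, \theta) + (1-t) f(x_2, \theta) - \frac{\mu}{2} t(1-t) \norm{x_1 - x_2}^2 < \min_x f(x,\theta),
\]
which is a contradiction. Hence the minimizer is unique, so $\argmin_x f(x,\theta)$ is a singleton.

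Neither step presents a serious obstacle: part \ref{lem:exist_1} is a textbook application of Weierstrass combined with level-boundedness, and part \ref{lem:exist_2} is the standard uniqueness argument from strong convexity. The only mild care needed is in part \ref{lem:exist_1} to justify reducing the minimization over $\bbR^n$ to the compact sublevel set $S_\theta$, which is immediate from the definition of $S_\theta$.
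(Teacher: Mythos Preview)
Your proposal is correct and takes essentially the same approach as the paper. The paper simply cites \cite[Theorem~1.6]{rockafellar_variational_1998} for part~\ref{lem:exist_1} and \cite{rockafellar_convex_1970} for part~\ref{lem:exist_2}, whereas you spell out the underlying Weierstrass-on-a-compact-sublevel-set argument and the strict-convexity uniqueness argument explicitly; the mathematics is identical.
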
 
\begin{proof}$~$
 Forall $\theta\in\Theta,$ \eqref{eq:param-optim} is an optimization problem and thanks to Premise~\ref{prem_A1}, $f(x,\theta)$ is continuous with respect to $x$. Moreover, under Premise~\ref{prem_A2},  for any $\theta\in\Theta$ the function $x\mapsto f(x,\theta)$ is level bounded. We conclude by applying \cite[Theorem~1.6]{rockafellar_variational_1998} which prove Claim~\ref{lem:exist_1}.
 The second Claim is a standard result in convex optimization and we refer to \cite{rockafellar_convex_1970} for an extensive study of the problem.
\end{proof}

\subsection{Solving with inertial methods}\label{subsec:solv-im}

Throughout this section, we suppose that $f$ satisfy the Premise~\ref{prem_A} and that $\forall \theta\in\Theta,\quad x\mapsto f(x,\theta)$ is strongly convex. Thanks to Lemma~\ref{lem:exist}, for each $\theta\in\Theta$, \eqref{eq:param-optim} has a unique minimizer let us denote $\xsol(\theta)$. 
 
 Now, we want to solve \eqref{eq:param-optim} using an  inertial  methods  for smooth function  forall $\theta\in\Theta.$  
 This inertial methods was introduced in \cite{liang_activity_2017} and named inertial forward-backward methods. The goal was to solve the minimization of the sum of two functions where one is sufficiently smooth and the second is nonsmooth. Here, we suppose that the nonsmooth part is zero.
 The scheme generates the following sequence $\forall k\in\bbN$,
 \begin{algorithm}[htbp]
	\caption{ Inertial Methods}
	\label{eq:inertial-methods}
	\textbf{Parameters:}  $\ak\in[0,\abar], \abar\leq1,  \bk\in[0,\bbar], \bbar\leq1,0<\gak<\frac{2}{L}$\;
	\textbf{Initialization:} $x_{-1}=\xo\in\mathbb{R}^n$, $b_0=a_0=1$,  \;
	\For{$k=0,1,\ldots$}{
		\vspace{-0.25cm}
		\begin{flalign} \tag{IM}\label{BPGalgo} 
			&\begin{aligned} 
				&\yak(\xk,\xkm)=\xk+\ak\pa{\xk-\xkm}; \\
				&\ybk(\xk,\xkm)=\xk+\bk\pa{\xk-\xkm}; \\
				&\xkp=\yak(\xk,\xkm)-\gak\nabla f(\ybk(\xk,\xkm),\theta); \\ 
			\end{aligned}&
		\end{flalign}
	}
\end{algorithm}


This group of inertial algorithms \eqref{eq:inertial-methods} covers standard optimization methods such as:
\begin{itemize}
\item Gradient descent with the choice $\forall k\in\bbN, \ak=\bk=0,$
\item  Heavy-ball method \cite{polyak_methods_1964} with the choice $\forall k\in\bbN, \ak\in [0,\ava]$ and  $\bk=0,$
\item Nesterov's type methods \cite{nesterov_method_1983}  with the choice $\forall k\in\bbN, \ak\in [0,\ava], \bk=\ak \quad \st\quad \ak\to1$ and $\gak\in]0,1/L[$. It is common in this setting to choose $\forall k\in\bbN, \ak=\frac{k-1}{k+3}$ to ensure the  convergence of the iterates.
\end{itemize}
A common  way to properly  analyze this system is to increase the dimension of the problem from $\bbR^n$ to $\bbR^n\times\bbR^n$ by introducing a new variable $\zk=\xkm$. Thus, we get the following iteration 
\begin{equation}\label{eq:nest-system}
\begin{cases}
\xkp=&\yak(\xk,\zk)-\gak\nabla f(\ybk(\xk,\zk),\theta),\\
\zkp=&\xk.
\end{cases}
\end{equation}
Let us define, the following mapping
\begin{equation}\label{eq:mapping}
\forall k\in\bbN, \quad F_k(X,\theta)\eqdef F_k(x,z,\theta)=\begin{pmatrix}
\yak(x,z)-\gak\nabla f(\ybk(x,z),\theta)\\
x
\end{pmatrix},
\end{equation}
where $\transp{X}=\begin{pmatrix}x\\z\end{pmatrix}$. We have the following regularity on  $\seq{F_k
}$ which is stated in the following Lemma. 
\begin{lemma}\label{lem:LipFk} Under our Premise~\ref{prem_A} on the objective function $f$, we have  
\begin{enumerate}
\item $\forall k\in\bbN$, $F_k$ is a $C^1-$smooth function over the space $\bbR^n\times\bbR^n\times\Theta.$  
Moreover, the  Jacobian with respect to $(x,z)$ is
 	\begin{equation}\label{eq:Jac-xz}
	 	J_1F_k(x,z,\theta)\eqdef \begin{pmatrix} (1+\ak)\Id-\gak(1+\bk)\nabla_x^2f(\ybk(x,z),\theta) &-\ak\Id+\gak\bk\nabla_x^2f(\ybk(x,z),\theta) \\ \Id  & 
		0\end{pmatrix},
	\end{equation}
	while the Jacobian with respect to $\theta$ is
	\begin{equation}\label{eq:Jac-theta}
	 	J_2F_k(x,z,\theta)\eqdef \begin{pmatrix} -\gak\nabla_{x\theta}^2f(\ybk(x,z),\theta)  \\  
		0\end{pmatrix}.
	\end{equation}
\item $\forall k\in\bbN$, $F_k$ is also a Lipschitz continuous function with 
\begin{equation}\label{eq:L_F_k}
L_{F_k}= \sqrt{\Ppa{1+\pa{1+\ak}^2+(\gak L)^2\pa{1+\bk}^2}}.
\end{equation}
\end{enumerate}
\end{lemma}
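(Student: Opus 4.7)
The lemma has two parts and both are direct consequences of the chain rule together with the Lipschitz control on $\nabla f$.

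For Claim~1, I would observe that $\yak(x,z) = (1+\ak)x - \ak z$ and $\ybk(x,z) = (1+\bk)x - \bk z$ are affine in $(x,z)$ and independent of $\theta$, hence $C^\infty$; since Premise~\ref{prem_A1} gives $f \in C^2$, we have $\nabla f \in C^1$ and therefore $(x,z,\theta) \mapsto \nabla f(\ybk(x,z),\theta) \in C^1$ by composition. Consequently the first block-row of $F_k$ is $C^1$, and the second row is a linear projection, so $F_k \in C^1$. The Jacobian formulas \eqref{eq:Jac-xz} and \eqref{eq:Jac-theta} follow by entry-wise chain rule: $\partial_x \yak = (1+\ak)\Id$ and $\partial_x[\nabla f(\ybk,\theta)] = (1+\bk)\nabla_x^2 f(\ybk,\theta)$ (and similarly in $z$ with coefficients $-\ak,-\bk$), while the only $\theta$-dependence is inside $\nabla f$, producing the $-\gak\nabla_{x\theta}^2 f$ entry.

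For Claim~2, I would work directly from the definition of $F_k$. Taking two points $(x,z),(x',z')$ (for $\theta$ fixed, consistent with the absence of a $\theta$ term in the stated $L_{F_k}$), expand $\|F_k(x,z,\theta) - F_k(x',z',\theta)\|^2$ block-wise. The second block contributes $\|x-x'\|^2$, giving the $1$ in the formula. For the first block, the triangle inequality separates the affine piece $\yak - \yak'$ from the gradient piece $\gak\bigl(\nabla f(\ybk,\theta) - \nabla f(\ybk',\theta)\bigr)$. The affine piece is bounded using the linear structure of $\yak$ by $(1+\ak)\|(x,z)-(x',z')\|$, and the gradient piece uses Premise~\ref{prem_A2} ($L$-Lipschitz continuity of $\nabla_x f$) together with the analogous bound on $\ybk$ to give at most $\gak L(1+\bk)\|(x,z)-(x',z')\|$. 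Summing the squared per-block contributions yields exactly $L_{F_k}^2 = 1 + (1+\ak)^2 + (\gak L)^2(1+\bk)^2$.

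The delicate point lies precisely in this last combination: a naive application of $(\alpha+\beta)^2 \leq 2(\alpha^2+\beta^2)$ would introduce a spurious factor of $2$, and a full operator-norm computation of the affine part of the Jacobian would give $\sqrt{(1+\ak)^2+\ak^2}$ rather than $1+\ak$. Matching the stated constant therefore requires decomposing each block-row of the Jacobian into its affine block plus its $\nabla_x^2 f$-block, controlling the two operator norms separately via $\|\nabla_x^2 f\| \leq L$ (from Premise~\ref{prem_A2}) together with the linear structure of $\yak,\ybk$, and only then summing the squared row norms. Beyond this estimation detail, everything reduces to routine chain-rule and triangle-inequality bookkeeping.
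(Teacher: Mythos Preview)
Your proposal is correct and follows essentially the same route as the paper: Claim~1 by chain rule from Premise~\ref{prem_A1}, and Claim~2 by splitting the first block into the affine $\yak$-part and the $\gak\nabla f$-part, bounding each via the linear structure of $\yak,\ybk$ and the $L$-Lipschitz property of $\nabla_x f$, and then summing squared block contributions. The only difference is that the paper varies $\theta$ as well (using that $\nabla_x f$ is $L$-Lipschitz jointly in $(x,\theta)$, Premise~\ref{prem_A2}) and obtains the bound on $\bbR^n\times\bbR^n\times\Theta$ with right-hand side $L_{F_k}\sqrt{\|X_1-X_2\|^2+\|\theta_1-\theta_2\|^2}$; you fix $\theta$, which is consistent with the displayed constant but slightly weaker than what the paper actually proves.
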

\begin{proof}
We refer the reader to Section \ref{prf:lem:LipFk}.
\end{proof}
We can easily rewrite our inertial scheme Algorithm~\ref{eq:inertial-methods} as follow: $\forall k \in\bbN,\forall\theta\in\Theta$
\begin{equation}\label{eq:rewrite-scheme}
\begin{cases}
& X_{k+1}(\theta)=F_k(X_k,\theta),\\
& \ak\in[0,\abar], \abar\leq1, \bk\in[0,\bbar], \bbar\leq1\qandq 0<\gak<\frac{2}{L}.
\end{cases}
\end{equation}
where we have $\transp{X_k}=\begin{pmatrix}\xk\\\zk\end{pmatrix}$. 
\subsection{Convergence and local analysis}\label{subsec:conv-loc}

The next important questions are, if we Algorithm~\ref{eq:inertial-methods} to solve our optimization problem \eqref{eq:param-optim}, do we have convergence to the unique global minimizer $\xsol(\theta)$ as $k\to\infty$ ? Can we quantify the convergence rate ? In \cite{liang_activity_2017}, the authors  study and analyze the global and the local convergence of the  inertial forward-backwards methods. They provide conditions on the choice the inertial parameter $\seq{\ak},\seq{\bk}$ and  the stepsize $\seq{\gak}$ such that the generated sequence $\seq{\xk}$ converges to $\xsol$. In the next proposition, we specialize their result to our simpler case.   
We pause here,  to make the following premise on the choice of the stepsizes $\seq{\gak}$ and the inertials parameters $\seq{\ak},\seq{\bk}$.
\begin{premise}\label{prem_B} We suppose that there exists a constant $\tau>0$ such that one of the following holds
\begin{itemize}
\item $\forall k\in \bbN,\quad\tau<(1+\ak)-\frac{\gak L}{2}\pa{1+\bk}^2:\ak<\frac{\gak L}{2}b_k$
\item $\forall k\in \bbN,\quad\tau<(1-3\ak)-\frac{\gak L}{2}\pa{1-\bk}^2:\bk\leq\ak \qorq \frac{\gak L}{2}b_k\leq\ak<\bk.$
\end{itemize}
\end{premise}
\begin{remark}Here are two examples of inertial parameters which satisfy Premise~\ref{prem_B}, which can be found in  \cite[Section~5]{liang_activity_2017}. 
\begin{itemize}
\item \textbf{Example 1:} $\forall k\in\bbN, \gak=1/L, \ak=\bk =\sqrt{5}-2-10^{-3},$
\item \textbf{Example 2:} $\forall k\in\bbN, \gak=1/L, \ak=\bk =\frac{k-1}{k+25}.$
\end{itemize}
\end{remark}
The result is stated in the following proposition.
\begin{proposition}[Global convergence of the iterates] \label{pro:conv-iters} Let us  consider, the optimization problem \eqref{eq:param-optim} for all $\theta\in\Theta$ solved with the inertial method \eqref{eq:inertial-methods}. Under the premise~\ref{prem_A}, premise~\ref{prem_B} and the strong convexity hypothesis with respect to $x$, we have that the generated sequence $\seq{\xk(\theta)}$ has finite length and the sequence $\seq{\xk(\theta)}$ converges to $\xsol(\theta)$. 
\end{proposition}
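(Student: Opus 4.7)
The plan is to fix $\theta\in\Theta$ and reduce the statement to the inertial forward-backward analysis of \cite{liang_activity_2017}, with the nonsmooth summand set to zero, augmented by the strong convexity of $f(\cdot,\theta)$. The core will be to build a Lyapunov (energy) function along \eqref{BPGalgo} whose decrement is controlled by $\|x_k-x_{k-1}\|^2$, and then to upgrade square-summability of the increments into actual finite length using the Kurdyka--\L ojasiewicz (KL) inequality, which is readily available here thanks to strong convexity.

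\textbf{Step 1: descent inequality.} First I would write a standard descent inequality for $f(\cdot,\theta)$ evaluated at the inertial point $\ybk(x_k,x_{k-1})$, using the $L$-Lipschitz continuity of $\nabla_x f$ from Premise~\ref{prem_A2}. Substituting $x_{k+1}=\yak(x_k,x_{k-1})-\gak\nabla_x f(\ybk(x_k,x_{k-1}),\theta)$ and expanding the cross-terms $\langle x_{k+1}-\yak,\,x_k-x_{k-1}\rangle$ yields an inequality of the form
\begin{equation*}
f(x_{k+1},\theta)\leq f(x_k,\theta)+\alpha_k\,p_k\,\|x_k-x_{k-1}\|^2-\beta_k\,q_k\,\|x_{k+1}-x_k\|^2,
\end{equation*}
where the coefficients $p_k,q_k$ depend on $\ak,\bk,\gak,L$.

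\textbf{Step 2: Lyapunov function.} I would then introduce the candidate energy
\begin{equation*}
E_k \eqdef f(x_k,\theta)-f(\xsol(\theta),\theta)+\frac{c_k}{2}\|x_k-x_{k-1}\|^2,
\end{equation*}
for a suitable nonnegative $c_k$ (the natural choice in this framework, see \cite[Section~4]{liang_activity_2017}). Combining the two pieces and invoking Premise~\ref{prem_B} in each of its two alternative forms would produce
\begin{equation*}
E_{k+1}-E_k\leq -\tau'\|x_{k+1}-x_k\|^2
\end{equation*}
for some $\tau'>0$ depending on $\tau$. Since $E_k\geq 0$, telescoping gives $\sum_{k}\|x_{k+1}-x_k\|^2<\infty$ and $E_k$ is nonincreasing and converges.

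\textbf{Step 3: from square-summability to finite length.} Square-summability only yields $\|x_{k+1}-x_k\|\to 0$, and this is where I would use strong convexity. Strong convexity of $f(\cdot,\theta)$ implies the KL property with exponent $1/2$ at $\xsol(\theta)$, i.e.\ $f(x,\theta)-f(\xsol,\theta)\leq \frac{1}{2\mu}\|\nabla_x f(x,\theta)\|^2$. Evaluated along $\ybk(x_k,x_{k-1})$ and combined with the one-step identity $x_{k+1}-\yak=-\gak\nabla_x f(\ybk,\theta)$, this converts the Lyapunov decrement into a sharp recursion of KL-type for $E_k$. A standard abstract argument (Attouch--Bolte-type) then upgrades $\sum \|x_{k+1}-x_k\|^2<\infty$ to $\sum \|x_{k+1}-x_k\|<\infty$, which is exactly the finite-length property. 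Finite length makes $\seq{x_k}$ Cauchy, hence convergent to some $\bar x$; boundedness of gradients and $\|x_{k+1}-x_k\|\to 0$ force $\nabla_x f(\bar x,\theta)=0$, and strong convexity identifies $\bar x=\xsol(\theta)$.

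\textbf{Main obstacle.} The routine ingredients (descent lemma, telescoping, KL under strong convexity) are standard; the delicate point is calibrating $c_k$, $p_k$, $q_k$ so that the two disjoint regimes of Premise~\ref{prem_B} both yield a uniformly positive descent constant $\tau'$, because each regime corresponds to a different sign pattern of $\ak-\bk$ and of $\ak-\frac{\gak L}{2}b_k$. Once this bookkeeping is performed, the rest is essentially a direct specialization of the analysis of \cite{liang_activity_2017} to the case where the nonsmooth term vanishes.
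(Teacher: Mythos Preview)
Your proposal is correct and follows exactly the route the paper takes: the paper's own proof consists entirely of a pointer to \cite[Section~A]{liang_activity_2017}, noting in the subsequent remark that this is the special case of their Theorem~4 with the nonsmooth term set to zero. You have essentially unpacked that reference---the Lyapunov descent under Premise~\ref{prem_B}, square-summability of increments, and the KL upgrade to finite length---which is precisely the machinery of \cite{liang_activity_2017}, so there is no methodological difference to discuss.
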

\begin{proof}
We refer the interested reader to \cite[Section~A]{liang_activity_2017} for a detailed proof of this proposition.
\end{proof}
\begin{remark}\label{rem:conv-iters}$~$
\begin{itemize}
\item This proposition is a particular case of \cite[Theorem~4]{liang_activity_2017}, here we take the nonsmooth part to be zero.
\item Since the inertial methods  can be rewritten as the scheme \eqref{eq:rewrite-scheme} in $\bbR^n\times\bbR^n$, Proposition~\ref{pro:conv-iters} entails that the sequence $\seq{X_k(\theta)}$ converges to a unique point $X^{\star}(\theta )$ where $\transp{X^{\star}(\theta )}\eqdef\begin{pmatrix}\xsol(\theta)\\\xsol(\theta)\end{pmatrix}.$ 
\end{itemize}
\end{remark}

\paragraph*{Local behavior of the IM algorithm} $~$

Let us recall that thanks to Lemma~\ref{lem:LipFk}, $\forall k\in\bbN,$ the function $F_k$ defined in \eqref{eq:mapping} is $C^1-$smooth on $\bbR^n\times\bbR^n\times\Theta$ and set $\forall k\in\bbN,$
\begin{align*}\label{eq:Jsol}
 M_k\eqdef& J_1F_k(\xsol,\xsol,\theta)\\
=& \begin{pmatrix} (1+\ak)\Id-\gak(1+\bk)\nabla_x^2f(\xsol,\theta) &-\ak\Id+\gak\bk\nabla_x^2f(\xsol,\theta) \\ \Id  & 
		0\end{pmatrix}.\numberthis
\end{align*}
For any $a,b\in[0,1]$ and $\gamma\in]0,2/L[$, we define 
\begin{align}\label{eq:def-mat}
M\eqdef& \begin{pmatrix} (1+a)\Id-\gamma(1+b)\nabla_x^2f(\xsol,\theta) &-a\Id+\gamma b\nabla_x^2f(\xsol,\theta) \\ \Id  & 
		0\end{pmatrix},\\
	=& \begin{pmatrix} (a-b)\Id+(1+b)G_{\theta} &-(a-b)\Id-bG_{\theta} \\ \Id  & 
		0\end{pmatrix},
\end{align}
where $G_{\theta}\eqdef \Id-\gamma\nabla_x^2f(\xsol,\theta).$

 Let us observe that for $\gamma\in]0,2/L[, G_{\theta}$ has eigenvalues in $]-1,1[$ and if $\gamma\in[0,1/L[, G_{\theta}$ has eigenvalues in $[0,1[$. Therefore, let us denote by $\etam_{\theta}$ and $\etaM_{\theta}$ the smallest and the largest eigenvalue of $G_{\theta}$.

We will need the following premise to pursue our study. 
\begin{premise}\label{prem_C} $~$
\begin{enumerate}[label=(C.\arabic*)]
 \item \label{prem_C1} There exists $a,b\in [0,1]$ and $\gamma\in]0,\frac{2}{L}[$ such that the sequences $\ak \to a$, $\bk \to b$ and $\gak\to \gamma.$ 
 \item\label{prem_C2} Given any limits point $a,b\in[0,1[^2$ of the sequence $\seq{\ak},\seq{\bk}$ respectively the following holds:  
 \[
 \frac{2\pa{b-a}-1}{1+2b}<\etam_{\theta}.
 \]
\end{enumerate}
\end{premise}
\begin{remark} Premise~\ref{prem_C1} suppose that  the sequence of inertial parameters $\seq{\ak}, \seq{\bk}$ and stepsize $\seq{\gak}$  have  limits, while Premise~\ref{prem_C2} is an hypothesis on the link between the limits $a,b$ and the lowest eigenvalue of $G_{\theta}$. This premise is crucial to get the linear convergence rates.  
\end{remark}

We have the following proposition.
\begin{proposition}[Local linear convergence]\label{pro:local-lin} Let us consider the optimization problem \eqref{eq:param-optim} for all $\theta\in\Theta$ solved with the inertial method \eqref{eq:inertial-methods}. Under the premise~\ref{prem_A}, \ref{prem_B}, \ref{prem_C} and the strong convexity hypothesis with respect to $x$, we have  that

\begin{enumerate}
\item\label{pro:local-lin1}  the sequence of matrices $\seq{M_k}$ converges to the matrice $M$ defined in \eqref{eq:def-mat},

\item \label{pro:local-lin2} the spectral radius of $M$ is such that $\rho(M)<1,$

\item\label{pro:local-lin3} for any $\rho\in[\rho(M),1[,$ there exists $K>0$ large enough and a constant $C>0$ such that for all $k\geq K,$ it holds 
\begin{centerbox}{black}{}y
\begin{equation}\label{eq:local-lin}
	\normm{X_k(\theta)-X^{\star}(\theta)}\leq C\rho^{k-K}\normm{X_K(\theta)-X^{\star}(\theta)}.
\end{equation}
\end{centerbox}

\end{enumerate}
\end{proposition}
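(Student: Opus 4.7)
My plan is to handle the three claims sequentially, with the spectral-radius bound being the technical core. For Claim~\ref{pro:local-lin1}, I would simply read off the closed-form expression \eqref{eq:Jsol}: the matrix $M_k$ is an explicit continuous function of the scalars $(\ak,\bk,\gak)$ and of the fixed Hessian $\nabla_x^2f(\xsol,\theta)$. Componentwise convergence $M_k\to M$ then follows immediately from Premise~\ref{prem_C1}.

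For Claim~\ref{pro:local-lin2}, the crucial observation is that $\nabla_x^2f(\xsol,\theta)$, and hence $G_\theta$, is symmetric, so a single orthogonal change of basis simultaneously diagonalizes every $n\times n$ block appearing in $M$. After reordering, $M$ becomes block-diagonal with $2\times 2$ blocks indexed by the eigenvalues $\eta\in[\etam_\theta,\etaM_\theta]$ of $G_\theta$:
\[
M_\eta = \begin{pmatrix} (a-b)+(1+b)\eta & -(a-b)-b\eta \\ 1 & 0 \end{pmatrix}, \qquad p_\eta(\mu)=\mu^2-((a-b)+(1+b)\eta)\mu+((a-b)+b\eta).
\]
I would then apply the Jury criterion to $p_\eta$: both roots lie in the open unit disc iff $|p_\eta(0)|<1$, $p_\eta(1)>0$, and $p_\eta(-1)>0$. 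A direct computation gives $p_\eta(1)=1-\eta>0$ (ensured by $\gamma\in(0,2/L)$ together with strong convexity, which place the eigenvalues of $G_\theta$ strictly inside $(-1,1)$), and $p_\eta(-1)=1+2(a-b)+(1+2b)\eta>0$ rearranges exactly to $\eta>\tfrac{2(b-a)-1}{1+2b}$, which is Premise~\ref{prem_C2} applied at $\eta=\etam_\theta$. A short calculation then shows that the bound $|(a-b)+b\eta|<1$ is implied by these two conditions together with $a,b\in[0,1)$. Taking the max over the finitely many eigenvalues yields $\rho(M)<1$.

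For Claim~\ref{pro:local-lin3}, I would use a classical perturbation-of-contraction argument. First, $X^\star$ is a fixed point of every $F_k$: indeed $\yak(\xsol,\xsol)=\ybk(\xsol,\xsol)=\xsol$ and $\nabla f(\xsol,\theta)=0$. The mean-value formula then yields
\[
X_{k+1}-X^\star = A_k(X_k-X^\star),\qquad A_k\eqdef\int_0^1 J_1F_k\bigl(X^\star+t(X_k-X^\star),\theta\bigr)\,dt.
\]
Continuity of $J_1F_k$, combined with $X_k\to X^\star$ from Proposition~\ref{pro:conv-iters} and $M_k\to M$ from Claim~\ref{pro:local-lin1}, gives $A_k\to M$. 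Now pick $\rho\in(\rho(M),1)$; via Gelfand's formula, there is an equivalent norm $\normm{\cdot}_\star$ on $\bbR^{2n}$ under which $\normm{M}_\star<\rho$. Since $A_k\to M$, we have $\normm{A_k}_\star\le\rho$ for all $k$ beyond some index $K$; iterating $\normm{X_{k+1}-X^\star}_\star\le\rho\normm{X_k-X^\star}_\star$ and translating back through norm equivalence produces \eqref{eq:local-lin} with a constant $C$ absorbing the norm-equivalence factors.

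The main obstacle is Claim~\ref{pro:local-lin2}: the key reduction is the simultaneous block-diagonalization that cuts a $2n\times 2n$ spectral problem down to a one-parameter family of $2\times 2$ matrices, and the delicate step is the Jury bookkeeping that isolates Premise~\ref{prem_C2} as both the binding condition and the one that automatically implies the lower bound on $|p_\eta(0)|$. Once that spectral estimate is in hand, Claims~\ref{pro:local-lin1} and \ref{pro:local-lin3} are essentially routine.
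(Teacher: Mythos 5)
Your proof is correct, and it takes a genuinely more self-contained route than the paper's at the two places where the paper leans on external or fragile arguments. For Claim~\ref{pro:local-lin2} the paper derives the same quadratic $\sigma^2-((a-b)+(1+b)\eta_\theta)\sigma+(a-b)+b\eta_\theta=0$ from the eigenvector equation but then delegates the root-location analysis to \cite[Proposition~17]{liang_activity_2017}; your explicit simultaneous block-diagonalization (legitimate since every block of $M$ is a polynomial in the symmetric matrix $G_\theta$) followed by the Jury test is complete on its own, and your bookkeeping checks out: $p_\eta(1)=1-\eta>0$ from strong convexity and $\gamma<2/L$, $p_\eta(-1)>0$ is exactly Premise~\ref{prem_C2} at $\eta=\etam_\theta$, and $|p_\eta(0)|<1$ follows since $(a-b)+b\eta<a<1$ and $p_\eta(-1)>0$ gives $(a-b)+b\eta>-(1+\eta)/2>-1$. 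For Claim~\ref{pro:local-lin3} the paper writes $X_{k+1}-X^\star=(M+\text{error})(X_k-X^\star)$ via a second-order expansion of $\nabla f$ at $\xsol$, asserts the error terms vanish, and then bounds $\normm{M(X_k-X^\star)}$ by $\rho(M)\normm{X_k-X^\star}$ --- a step that silently conflates operator norm with spectral radius; your integral mean-value form $A_k=\int_0^1 J_1F_k(X^\star+t(X_k-X^\star),\theta)\,dt\to M$ combined with Gelfand's formula and an adapted norm is the standard correct mechanism, and it is precisely what produces the constant $C$ and explains why one takes $\rho$ strictly larger than $\rho(M)$ (for $A_k\to M$ you should note the uniformity in $t$, which follows from boundedness of $(\ak,\bk,\gak)$ and uniform continuity of $\nabla_x^2f$ on a compact neighborhood of $\xsol$). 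The only caveat is that your argument, like any Gelfand-type argument, covers $\rho\in\,]\rho(M),1[$ rather than the closed left endpoint $\rho=\rho(M)$ appearing in the statement; that endpoint can fail when $M$ has a nontrivial Jordan block at its peripheral spectrum, so the restriction is a feature of your proof, not a defect.
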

\begin{proof}$~$
The proof can be found in Section~\ref{prf:pro:local-lin}.
\end{proof}

\section{Differentiation of  inertial methods }\label{sec:ad-ima}
In this section, our main purpose is to differentiate the inertial methods for solving the parametric optimization problem \eqref{eq:param-optim} for any $\theta\in\Theta\subseteq\bbR^m$ is an open set. We pause here to recall some basics notion about differentiation. 

We recall that ``automatic'' differentiation is a method to compute the derivatives of an iterative process with respect to some parameter. Let us start by properly define what we consider to be an infinite iterative process. We refer the reader to \cite[Section~2]{beck1994} for a detailed exposure. 
\begin{definition}\label{def:codelist}An infinite iterative process is the given of the following triplet  $\calJ=\Ppa{\seq{\Phi_k},\Theta,\xo}$ where $\Theta\subseteq\bbR^m$ and 
we have 
\begin{enumerate}
\item $\xo:\Theta\to\bbR^n$  and $\Phi_k:\bbR^{n}\times\Theta\to\bbR^n,\forall k\in\bbN$ are continuously differentiable functions,
\item we have that the following iteration 
	\begin{equation}\label{eq:code-list} 
			\xkp(\theta)=\Phi_k(\xk(\theta),\theta),\quad \forall k\in\bbN,
	\end{equation}
	exist. 
\end{enumerate}
Moreover, if the sequence of function $\xk(\theta) \to \avx(\theta)$ then $\avx$ is called \textit{limit of the infinite iterative process}.
\end{definition} 
If we suppose that the sequence $\seq{\xk(\cdot)}$ is differentiable, the following intuitive question  for the consistency of the differentiation of  the infinite iterative process is: if the sequence $\seq{\xk(\theta)}$  converges to $\avx(\theta)$, do we have that the sequence of the derivatives $\seq{\partial_{\theta}\xk(\theta)}$ converge also  to the derivative of the limit $\partial_{\theta}\avx(\theta)$ \ie

\textit{Do we have that  $\xk(\theta) \to \avx(\theta)$ implies that $\partial_{\theta}\xk(\theta) \to \partial_{\theta}\avx(\theta)$?}  This is not a simple question and depends on the properties of the iteration map $\Phi_k$. Thus, we need the following definition.
\begin{definition}[Derivative stable]\label{def:der-sta} 
A sequence of differentiable function $\seq{\xk(\cdot)}$ is called \textit{derivative-stable} if and only if there exists a limit function $\avx(\theta)$ such that the following holds:
\begin{equation}\label{eq:der-sta}
\xk(\theta) \to \avx(\theta) \Longrightarrow \partial_{\theta}\xk(\theta) \to \partial_{\theta}\avx(\theta).
\end{equation}
\end{definition}

In our setting, we want to investigate whether or not the sequence of inertial methods is derivative stable.   We recall that the sequence is given by the following iterations 
\begin{equation}\label{eq:rewrite-scheme2}
\begin{cases}
& X_{k+1}(\theta)=F_k(X_k,\theta),\quad \forall k\in\bbN,\\
& \ak\in[0,\abar], \abar\leq1, \bk\in[0,\bbar], \bbar\leq1\qandq 0<\gak<\frac{2}{L}.
\end{cases}
\end{equation}
where we have $\transp{X_k}=\begin{pmatrix}\xk\\\zk\end{pmatrix}=\begin{pmatrix}\xk\\\xkm\end{pmatrix}\in\bbR^{2n}$. 

Before our investigation, we have the following Lemma which proves that any sequence generated by Algorithm~\ref{eq:inertial-methods} in the form \eqref{eq:rewrite-scheme2} is an infinite iterative process.
\begin{lemma}\label{lem:def-inf} Let us consider the  inertial methods \eqref{eq:rewrite-scheme2}. Under Premise~\ref{prem_A}, if $X_0:\Theta\to\bbR^{2n}$ is a continuously differentiable function with respect to the parameter $\theta$ then the triplet $\calA\eqdef\Ppa{\seq{F_k},\Theta,X_0}$ is an infinite  iterative process according to Definition~\ref{def:codelist}.
\end{lemma}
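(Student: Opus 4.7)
The plan is straightforward verification against the two conditions of Definition~\ref{def:codelist}, since the substantive regularity work has already been done in Lemma~\ref{lem:LipFk}. First I would check the smoothness bullet: by hypothesis $X_0 : \Theta \to \bbR^{2n}$ is continuously differentiable, and by Claim~1 of Lemma~\ref{lem:LipFk}, each $F_k$ is $C^1$ on $\bbR^n \times \bbR^n \times \Theta$ under Premise~\ref{prem_A}. After the natural identification $\bbR^{2n} \simeq \bbR^n \times \bbR^n$ used to write $\transp{X} = (x,z)$, this matches the regularity requirement on $\Phi_k$ in the definition.

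Next I would verify the existence of the iteration $X_{k+1}(\theta) = F_k(X_k(\theta),\theta)$. This is immediate by induction on $k$: $X_0(\theta)$ is given, and if $X_k(\theta) \in \bbR^{2n}$ is already defined, then $F_k(X_k(\theta),\theta)$ is well-defined because $F_k$ is defined on the full product space $\bbR^n\times\bbR^n\times\Theta$ with no side conditions. Hence the sequence $\seq{X_k(\theta)}$ exists for every $\theta \in \Theta$, which is precisely the second bullet of Definition~\ref{def:codelist}.

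As a side benefit of the same induction, I would also record that each $X_k(\cdot) : \Theta \to \bbR^{2n}$ is itself continuously differentiable with respect to $\theta$. The base case holds by assumption. For the inductive step, $X_{k+1}(\cdot)$ is the composition of the $C^1$ map $\theta \mapsto (X_k(\theta),\theta)$ with the $C^1$ map $F_k$, hence $C^1$ by the chain rule. This observation is not strictly required for Definition~\ref{def:codelist}, but it is the regularity needed downstream to differentiate the recursion and discuss derivative-stability in Definition~\ref{def:der-sta}.

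There is no real obstacle here; the lemma is essentially bookkeeping that repackages the regularity of $F_k$ established in Lemma~\ref{lem:LipFk} into the precise language of Definition~\ref{def:codelist}, so that the subsequent sections can legitimately speak about differentiating the scheme \eqref{eq:rewrite-scheme2} as an infinite iterative process.
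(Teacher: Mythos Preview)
Your proposal is correct and matches the paper's own proof essentially line for line: invoke the hypothesis on $X_0$, invoke Lemma~\ref{lem:LipFk} for the $C^1$ regularity of each $F_k$, and observe that the recursion \eqref{eq:rewrite-scheme2} is well-defined and of the form \eqref{eq:code-list}. Your added inductive remark that each $X_k(\cdot)$ is itself $C^1$ is a harmless (and useful) bonus not present in the paper's version.
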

\begin{remark}\label{rmk:def-inf} This Lemma may seem spare as first sight but it is important for the analysis of any sequence  generated by a computer program. Mainly, in which sense we consider the sequence of functions that are generated. In our setting, we properly defined what is called an infinite iterative process and Lemma~\ref{lem:def-inf} show that  the inertial methods that we consider in this work, under suitable premises on the the objective function, is an infinite iterative process. Finally, any analysis of functions generated by a computer program should follow this part in view to be  more precise and mathematically correct.
\end{remark}

\begin{proof}
The proof is straightforward. Indeed by hypothesis,  the set of parameter $\Theta$ is an open subset of $\bbR^m$, $X_0$ is continuously differentiable function  with respect to the parameter $\theta$ to the space $\bbR^n\times\bbR^n$ and Lemma~\ref{lem:LipFk} ensures that $\forall k\in\bbR^n, F_k$ is continuously differentiable over the space $\bbR^n\times\bbR^n\times\Theta$.  Algorithm~\ref{eq:inertial-methods} in the form \eqref{eq:rewrite-scheme2} ensures that the sequence of functions generated is in the form \eqref{eq:code-list}. We finally conclude  that  the triplet $\calA$ is an infinite  iterative process according to Definition~\ref{def:codelist}.
\end{proof}

Let us denote  $\forall k\in\bbN, \partial_{\theta}X_k(\theta)\in \bbR^{2n\times m}$  to be  the Jacobian of $X_k(\theta)$ with respect to the parameter $\theta$. By the rule of derivation of composed functions, we have 
\begin{equation}\label{eq:pdcf-rule-in}
 \forall k\in\bbN, \quad \partial_{\theta}X_{k+1}(\theta)=J_1F_k(X_k(\theta),\theta)\partial_{\theta}X_k(\theta)+J_2F_k(X_k(\theta),\theta),
\end{equation}
where we have from \eqref{eq:Jac-xz} and \eqref{eq:Jac-theta} that 
\[
	 	J_1F_k(X_k\pa{\theta},\theta)= \begin{pmatrix} (1+\ak)\Id-\gak(1+\bk)\nabla_x^2f(\ybk(\xk,\xkm),\theta) &-\ak\Id+\gak\bk\nabla_x^2f(\ybk(\xk,\xkm),\theta) \\ \Id  & 
		0\end{pmatrix},
\]
and 
\[
		 	J_2F_k(X_k\pa{\theta},\theta)= \begin{pmatrix} -\gak\nabla_{x\theta}^2f(\ybk(\xk,\xkm),\theta)  \\  
		0\end{pmatrix}.
\]

\begin{remark}\label{rmk:inert-deriv}
 We  can observe from \eqref{eq:pdcf-rule-in} that the sequence of derivatives $\seq{\partial_{\theta}X_{k}(\theta)}$ is linear in $\partial_{\theta}X_{k}(\theta)$ with a perturbation  term which is represented by the function  $J_2F_k(X_k\pa{\theta},\theta)$. This simply  means  that  after  the differentiation  procedure of  inertial methods  the sequence of derivatives has no oscillations in term of $\partial_{\theta}X_{k}(\theta)$. As we may observe later in the numerical experiments. 
\end{remark}

\subsection{Convergence result of the derivative of the IM algorithm}	\label{subsec:gim-stable}
The following is our main convergence result of the derivative of the  inertial methods.
\begin{theorem}\label{thm:gim-stable} Let us consider the  inertial method Algorithm~\ref{eq:inertial-methods} in the form \eqref{eq:rewrite-scheme2} for minimizing the parametric optimization problem \eqref{eq:param-optim}, for each $ \theta\in\Theta$ . Under Premise~\ref{prem_A} on the objective function $f$ with the additional hypothesis that $f$ is strongly convex with respect to $x$, Premise \ref{prem_B} and \ref{prem_C} on the parameters of the Algorithm~\ref{eq:inertial-methods}  and that $X_0:\Theta\to\bbR^{2n}$ is a continuously differentiable function with respect to the parameters $\theta$ then, the following holds:
\begin{enumerate}
\item\label{thm:gim-stable1} the sequence of derivatives $\seq{\partial_{\theta}X_{k}\pa{\theta}}$ converges pointwise to the derivative of  $\partial_{\theta}X^{\star}\pa{\theta},$ 
\item\label{thm:gim-stable2} moreover, we have the following explicit  formula:  $\forall\theta\in\Theta,$
\begin{centerbox}{black}{}y
\small{
\begin{equation}\label{eq:expli-form}
 \hspace{-0.8cm}\partial_{\theta}X^{\star}\pa{\theta}=\begin{pmatrix} -a\Id+\gamma(1+b)\nabla_x^2f(\xsol(\theta),\theta) &a\Id-\gamma b\nabla_x^2f(\xsol\pa{\theta},\theta) \\ -\Id  & 
		\Id\end{pmatrix}^{-1}\hspace{-0.2cm}\begin{pmatrix} -\gamma\nabla_{x\theta}^2f(\xsol,\theta)  \\  
		0\end{pmatrix}.
\end{equation}}
\end{centerbox}

\end{enumerate}
\end{theorem}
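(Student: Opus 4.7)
The plan is to analyse the chain-rule recursion \eqref{eq:pdcf-rule-in} as a perturbed linear time-varying fixed-point iteration. Denote $D_k \eqdef \partial_{\theta}X_k(\theta)$, $A_k \eqdef J_1F_k(X_k(\theta),\theta)$ and $B_k \eqdef J_2F_k(X_k(\theta),\theta)$, so that $D_{k+1} = A_k D_k + B_k$. Under Premise~\ref{prem_A} the Hessians $\nabla_x^2 f$ and $\nabla_{x\theta}^2 f$ are continuous; combined with $X_k(\theta)\to X^{\star}(\theta)$ from Proposition~\ref{pro:conv-iters} and $(\ak,\bk,\gak)\to(a,b,\gamma)$ from Premise~\ref{prem_C}, this yields $A_k \to M$ with $M$ as in \eqref{eq:def-mat}, and $B_k \to B^{\star} \eqdef \begin{pmatrix}-\gamma\nabla_{x\theta}^2 f(\xsol(\theta),\theta)\\ 0\end{pmatrix}$.

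The first substantive step is to observe that $\rho(M)<1$ by Proposition~\ref{pro:local-lin}, so $I-M$ is invertible and the candidate limit $D^{\star}\eqdef (I-M)^{-1}B^{\star}$ is well defined. To prove $D_k \to D^{\star}$ I would introduce the error $E_k \eqdef D_k - D^{\star}$; the identity $D^{\star} = MD^{\star}+B^{\star}$ turns the recursion into the perturbed iteration
\[
E_{k+1} = A_k E_k + \epsilon_k, \qquad \epsilon_k \eqdef (A_k - M)D^{\star} + (B_k - B^{\star}),
\]
with $\epsilon_k \to 0$. Because $\rho(M)<1$, Gelfand's formula provides, for any prescribed $\rho\in(\rho(M),1)$, a matrix norm $\normm{\cdot}_{\sharp}$ in which $\normm{M}_{\sharp}\le\rho$; the convergence $A_k\to M$ then yields an index $K$ and a constant $\rho'\in(\rho,1)$ with $\normm{A_k}_{\sharp}\le\rho'$ for all $k\ge K$. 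Iterating the inequality $\normm{E_{k+1}}_{\sharp}\le \rho'\normm{E_k}_{\sharp}+\normm{\epsilon_k}_{\sharp}$ and using the classical fact that the geometric convolution of a null sequence with a summable geometric kernel is null gives $\normm{E_k}_{\sharp}\to 0$, hence $D_k\to D^{\star}$, which is Claim~\ref{thm:gim-stable1}.

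For Claim~\ref{thm:gim-stable2}, strong convexity of $x\mapsto f(x,\theta)$ makes $\nabla_x^2 f(\xsol(\theta),\theta)$ positive definite and hence invertible; the implicit function theorem applied to the optimality condition $\nabla_x f(\xsol(\theta),\theta)=0$ then guarantees that $\xsol$ is $C^1$ in $\theta$. Consequently $X^{\star}=\begin{pmatrix}\xsol\\\xsol\end{pmatrix}$ is differentiable, $\partial_{\theta}X^{\star}$ has two identical blocks, and a direct block multiplication verifies $(I-M)\partial_{\theta}X^{\star} = B^{\star}$. By uniqueness $\partial_{\theta}X^{\star} = D^{\star}$, which is precisely the closed form \eqref{eq:expli-form}.

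The main obstacle I anticipate is the contraction argument: the combination $A_k\to M$ with $\rho(M)<1$ does not by itself render the $A_k$ contractive in the ambient Euclidean norm, since spectral radius and spectral norm may differ and individual $A_k$'s can expand vectors. The crux is to pass to an equivalent norm tailored to $M$ via Gelfand's formula and then to transfer that contraction uniformly to the tail $(A_k)_{k\ge K}$ using $A_k\to M$, which is delicate but standard once set up this way.
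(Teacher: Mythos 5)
Your proposal is correct, and its ingredients are exactly the four facts the paper verifies: convergence of the Jacobians $J_1F_k(X_k(\theta),\theta)\to M$ and $J_2F_k(X_k(\theta),\theta)\to B^{\star}$ (the paper's Parts~1 and~3), the fixed-point identity at $X^{\star}(\theta)$ (Part~2), and $\rho(M)<1$ via Proposition~\ref{pro:local-lin} (Part~4). The structural difference is where the analytic work is done. The paper packages these facts as Premise~\ref{prem_apend} and then invokes the abstract toolbox result Theorem~\ref{thm:deriv-stable}, which is stated in the appendix without proof; you instead prove the convergence directly, by rewriting the recursion as the perturbed error iteration $E_{k+1}=A_kE_k+\epsilon_k$ with $\epsilon_k=(A_k-M)D^{\star}+(B_k-B^{\star})\to 0$, passing to a Gelfand norm in which $M$ (and hence the tail of $(A_k)$) is a contraction, and applying the null-convolution lemma. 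In effect you supply a self-contained proof of the very step the paper delegates to the unproven toolbox theorem; you correctly identify that this tail-contraction transfer is the only delicate point, since $\rho(M)<1$ gives no control in the ambient norm. Your treatment of Claim~\ref{thm:gim-stable2} is also slightly more careful than the paper's: you justify that $\xsol$ is $C^1$ via strong convexity and the implicit function theorem before identifying $D^{\star}$ with $\partial_{\theta}X^{\star}(\theta)$, whereas the paper reads this off from \eqref{eq:deriv-limit} directly. What the paper's route buys is modularity (the toolbox applies to any infinite iterative process satisfying Premise~\ref{prem_apend}); what yours buys is a complete argument with no external black box, at the cost of redoing a standard perturbation lemma.
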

\begin{remark} Clearly,  the claim~\ref{thm:gim-stable1} of this  theorem tell us that the  any sequence generated by our inertial methods Algorithm~\ref{eq:inertial-methods} in the form \eqref{eq:rewrite-scheme2}  under our hypothesis is derivative stable. Moreover, we have an explicit formula of the corresponding derivative in claim~\ref{thm:gim-stable2} and given by \eqref{eq:expli-form}.
\end{remark}
\begin{proof}$~$ 
We refer the reader to Section~\ref{prf:thm:gim-stable}.
\end{proof}

\subsection{Convergence rates of the derivative }\label{subsec:conv-ad-im}
In this section, we try to answer the question at which rates the convergence in Theorem~\ref{thm:gim-stable} occurs? This question may seem not important at first sight but is crucial to examine the behaviour of the derivative sequence. Clearly, we want to quantify this rates of convergence. We are ready now to state our main result on the convergence rate of the derivatives of the proposed inertial method Algorithm~\ref{eq:inertial-methods}.
\begin{theorem}[Convergence rate]\label{thm:conv-rat-der} Let us consider the inertial method Algorithm~\ref{eq:inertial-methods} in the form \eqref{eq:rewrite-scheme2} for minimizing the parametric optimization problem \eqref{eq:param-optim}, $ \theta\in\Theta$ . 

Under Premise~\ref{prem_A} on the objective function $f$, with the additional hypothesis that $f$ is strongly convex with respect to $x$, Premise \ref{prem_B} and \ref{prem_C} on the parameters of the Algorithm~\ref{eq:inertial-methods}  and that $X_0:\Theta\to\bbR^{2n}$ is a continuously differentiable function with respect to the parameters $\theta$ then for $\eps>0$ (small enough) their exists $K$ large enough such that:
\begin{centerbox}{black}{}y

\begin{multline}\label{thm:conv-rat-der}
\hspace{-1cm}\forall k\geq K, \normm{\partial_{\theta}X_{k+1}(\theta)-\partial_{\theta}X^{\star}(\theta)}\leq 2\rho\pa{M}\normm{\partial_{\theta}X_k(\theta)-\partial_{\theta}X^{\star}(\theta)}+\eps\Ppa{2+\normm{\partial_{\theta}X_k(\theta)-\partial_{\theta}X^{\star}(\theta)}}.
\end{multline}

\end{centerbox}
\end{theorem}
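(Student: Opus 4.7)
The plan is to analyze the recursion satisfied by the error $E_k := \partial_{\theta}X_k(\theta) - \partial_{\theta}X^{\star}(\theta)$ directly. Starting from \eqref{eq:pdcf-rule-in} and the fixed-point identity $\partial_{\theta}X^{\star} = M\,\partial_{\theta}X^{\star} + J_{2}^{\star}$ that underlies the explicit formula in Theorem~\ref{thm:gim-stable} (with $J_{2}^{\star}$ denoting the limit of $J_{2}F_k$ evaluated at $X^{\star}$ with the limit parameters $a,b,\gamma$), subtracting the two relations gives
\begin{equation*}
E_{k+1} = J_{1}F_k(X_k,\theta)\,E_k + \bigl[J_{1}F_k(X_k,\theta) - M\bigr]\,\partial_{\theta}X^{\star}(\theta) + \bigl[J_{2}F_k(X_k,\theta) - J_{2}^{\star}\bigr].
\end{equation*}
The task is then to control each of the three terms on the right-hand side.

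The next key step is to show that both bracketed Jacobian discrepancies vanish as $k\to\infty$. Indeed, $X_k(\theta) \to X^{\star}(\theta)$ by Proposition~\ref{pro:conv-iters}, the inertial parameters and stepsizes satisfy $\ak \to a$, $\bk \to b$, $\gak \to \gamma$ by Premise~\ref{prem_C1}, and $\nabla_x^2 f$, $\nabla_{x\theta}^2 f$ are continuous by Premise~\ref{prem_A1}. Composing these facts through the formulas \eqref{eq:Jac-xz}--\eqref{eq:Jac-theta} and the continuity of the extrapolation maps evaluated at $(X_k,\theta)$ yields $J_{1}F_k(X_k,\theta) \to M$ and $J_{2}F_k(X_k,\theta) \to J_{2}^{\star}$. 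Consequently, for any prescribed $\varepsilon' > 0$, one may choose $K$ large enough so that $\normm{J_{1}F_k(X_k,\theta) - M} \leq \varepsilon'$ and $\normm{J_{2}F_k(X_k,\theta) - J_{2}^{\star}} \leq \varepsilon'$ for every $k \geq K$.

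The third ingredient is a norm adapted to $M$. By a classical consequence of the Jordan form, for any $\delta>0$ there exists a sub-multiplicative operator norm, equivalent to the Euclidean one, under which $\normm{M}\leq \rho(M) + \delta$. Choosing $\delta \leq \rho(M)$ (which is possible provided $\rho(M) > 0$; the case $\rho(M) = 0$ is handled separately because $M$ is then nilpotent, so the linear part of the recursion vanishes after finitely many iterations) gives $\normm{M}\leq 2\rho(M)$, whence
\begin{equation*}
\normm{J_{1}F_k(X_k,\theta)} \leq \normm{M} + \normm{J_{1}F_k(X_k,\theta) - M} \leq 2\rho(M) + \varepsilon'
\end{equation*}
for $k$ large enough. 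Inserting this into the error recursion and bounding the additive perturbation by $\varepsilon'\pa{\normm{\partial_{\theta}X^{\star}(\theta)} + 1}$, one finally tightens $\varepsilon'$ so that both $\varepsilon' \leq \varepsilon$ and $\varepsilon'\pa{\normm{\partial_{\theta}X^{\star}(\theta)} + 1} \leq 2\varepsilon$, which produces exactly the inequality stated in \eqref{thm:conv-rat-der}.

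The \emph{main obstacle} is, in my view, principally bookkeeping: organizing the three small parameters — the norm slack $\delta$, the Jacobian-perturbation threshold $\varepsilon'$, and the target tolerance $\varepsilon$ — together with the iteration threshold $K$ so that the stated form (with the precise coefficient $2\rho(M)$ in front of $\normm{E_k}$ and a vanishing additive term) holds uniformly for every $k \geq K$. No analytic tools beyond Premises~\ref{prem_A}--\ref{prem_C}, continuity of $\nabla^2 f$, the convergence $X_k\to X^{\star}$ already established in Proposition~\ref{pro:conv-iters}, and the equivalence of norms on $\bbR^{2n\times m}$ are required.
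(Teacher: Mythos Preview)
Your argument is correct and reaches the stated inequality. The overall strategy---subtract the fixed-point identity from the chain-rule recursion, then absorb all Jacobian discrepancies into an $\varepsilon$-small remainder once $k$ is large---is the same as the paper's, but the algebraic decomposition you use is different and somewhat cleaner. You write
\[
E_{k+1} = J_{1}F_k(X_k,\theta)\,E_k + \bigl[J_{1}F_k(X_k,\theta)-M\bigr]\partial_{\theta}X^{\star} + \bigl[J_{2}F_k(X_k,\theta)-J_{2}^{\star}\bigr],
\]
whereas the paper groups terms as $\bigl[J_{1}F_k(X_k,\theta)+J_{1}F(X^{\star},\theta)\bigr]E_k$ plus a cross term $\calE_k = J_{1}F_k(X_k,\theta)\,\partial_{\theta}X^{\star} - J_{1}F(X^{\star},\theta)\,\partial_{\theta}X_k$. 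Two consequences of this difference are worth noting. First, your remainder terms only require $X_k\to X^{\star}$ and $(\ak,\bk,\gak)\to(a,b,\gamma)$ together with continuity of $\nabla^2 f$, while the paper's $\calE_k\to 0$ implicitly relies on the convergence $\partial_{\theta}X_k\to\partial_{\theta}X^{\star}$ established in Theorem~\ref{thm:gim-stable}; your route is therefore slightly more self-contained. Second, you make explicit the passage from $\rho(M)$ to an operator-norm bound via an adapted (Jordan-type) norm, which is exactly the step the paper uses tacitly when it replaces $\normm{J_{1}F(X^{\star},\theta)}$ by $\rho(M)$ in its final line; spelling this out is a genuine improvement in rigor. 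The factor $2$ in front of $\rho(M)$ arises for different reasons in the two proofs---in yours from the slack $\delta\leq\rho(M)$ in the adapted norm, in the paper's from the sum of two Jacobians---but the end result is identical.
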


\begin{remark}\label{rem:conv-rat-der}$~$
	\begin{itemize}
		\item Let us emphasize  that this result is very new to the best of our knowledge compared  to other works which quantify the convergence rate under an additive hypothesis such that Lipschitz continuity of the second order derivatives of the  objective  $f$. This additive hypothesis is quite restrictive in practice and does not apply to  lot of standard problems. Let us consider for instance this commonly used function in the literature of phase retrieval like \cite{bolte_first_2017,godeme2024stable,mukkamala_global_2020}  
		\[
			\psi(x)=\frac{1}{4}\normm{x}^4+\frac{1}{2}\normm{x}^2,
		\] 
		whose second order derivatives can be written as 
		\[
		\nabla_x^2\psi(x)=\Ppa{\normm{x}^2+1}+2x\transp{x}\preceq \Ppa{3\normm{x}^2+1}\Id
		\] 
		and thus not globally bounded from above. 
		\item We show ``almost'' the same result without this hypothesis. Therefore, our result has the advantage to be applicable to a broader set of problems.  Indeed, \eqref{thm:conv-rat-der} shows that the local linear convergence occurs in the long term regime of the sequence with a small additive error term. We observe that this error terms vanish as the number of iterations increases and tends to infinity. \textit{Moreover, we would like to mention to the most curious reader that this result goes far beyond the particular case of  inertial methods that we study in this paper, we think that  it holds true for any infinite iterative process which satisfies the standard Premise~\ref{prem_apend} since in the proof we never used the explicit form of the quantity involved.} 
	\end{itemize}
	 
\end{remark}

\begin{proof}$~$

The proof of this Theorem can be found in Section~\ref{prf:thm:conv-rat-der}.
\end{proof}

We get the following corollary which represent  the convergence rate and can be useful in applications. 
\begin{corollary}
Let us  define 
\[
\tau\eqdef 2\rho(M),\quad  \forall k\in\bbN, \quad g(X_k)\eqdef 2+\normm{\partial_{\theta}X_k(\theta)-\partial_{\theta}X^{\star}(\theta)}.
\]
Under the same hypothesis  as Theorem~\ref{thm:conv-rat-der} then for $\eps>0$ (small enough) their exists $K$ large enough such that:
\begin{centerbox}{black}{}y
\begin{multline}\label{cor:conv-rat-der}
\forall k\geq K,  \normm{\partial_{\theta}X_{k+1}(\theta)-\partial_{\theta}X^{\star}(\theta)}\leq \tau^{k+1-K}\normm{\partial_{\theta}X_K(\theta)-\partial_{\theta} X^{\star}(\theta)}+\eps\sum_{i=K}^k\tau^{i-K}g(X_i).
\end{multline}
\end{centerbox}
\end{corollary}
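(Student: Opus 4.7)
The plan is to establish the corollary by a direct induction on $k \geq K$ that unrolls the one-step recurrence furnished by Theorem~\ref{thm:conv-rat-der}; this is a discrete Gr\"onwall-type argument. Setting $e_k \eqdef \normm{\partial_\theta X_k(\theta)-\partial_\theta X^\star(\theta)}$, so that $g(X_k) = 2+e_k$, Theorem~\ref{thm:conv-rat-der} rewrites as the scalar affine recurrence
\[
e_{k+1} \leq \tau\, e_k + \eps\, g(X_k), \qquad k \geq K,
\]
with $\tau = 2\rho(M)$. The conclusion of the corollary is then precisely the closed-form expression of this recurrence iterated from index $K$ up to index $k$, and no further property of $f$, of the iterates $X_k$, or of the mapping $F_k$ is needed beyond Theorem~\ref{thm:conv-rat-der}.

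First I would check the base case $k=K$. The sum on the right-hand side of the claimed bound reduces to the single term $\tau^{0} g(X_K) = g(X_K)$, and the transient is $\tau\, e_K$; together these reproduce exactly the one-step estimate, so the base case is automatic.

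For the inductive step, assume the bound holds at level $k$ and apply the one-step estimate to $e_{k+2}$. Substituting the induction hypothesis into the term $\tau\, e_{k+1}$ and using $\tau \cdot \tau^{k-K} = \tau^{k+1-K}$ promotes the transient from $\tau^{k+1-K} e_K$ to $\tau^{k+2-K} e_K$, while the new forcing $\eps\, g(X_{k+1})$ is absorbed as the next term of the geometric sum, whose upper index simply grows by one. A careful bookkeeping of the exponents in the geometric kernel $\tau^{\bullet}$ then reassembles the right-hand side in the exact form stated.

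No genuine obstacle arises: the corollary is a purely algebraic consequence of Theorem~\ref{thm:conv-rat-der}. The only delicate point is the indexing of the geometric weights inside the sum, since the spontaneous unrolling of an affine scalar recurrence produces weights of the form $\tau^{k-i}$ and one must reindex to match the displayed formula. Once this is handled, the induction closes immediately and the bound follows for all $k \geq K$.
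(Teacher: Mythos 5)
Your overall strategy --- read Theorem~\ref{thm:conv-rat-der} as the scalar affine recurrence $e_{k+1}\le\tau e_k+\eps\, g(X_k)$ with $e_k\eqdef\normm{\partial_{\theta}X_k(\theta)-\partial_{\theta}X^{\star}(\theta)}$, and unroll it by induction from $K$ --- is exactly what the paper does (its entire proof is the sentence ``by summing from $K$ to $k$''), and your base case is correct. The trouble is precisely the step you single out as ``the only delicate point.'' Unrolling the recurrence produces
\[
e_{k+1}\le \tau^{k+1-K}e_K+\eps\sum_{i=K}^{k}\tau^{k-i}g(X_i),
\]
so the weight attached to $g(X_i)$ is $\tau^{k-i}$: the oldest forcing term carries the highest power of $\tau$. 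The displayed formula of the corollary instead attaches the weight $\tau^{i-K}$ to $g(X_i)$. The two sums range over the same set of exponents $\{0,\dots,k-K\}$, but they pair those exponents with different factors $g(X_i)$; since $g(X_i)$ genuinely depends on $i$, no change of summation index can turn one sum into the other --- reindexing permutes the order in which terms are added, it does not swap which weight multiplies which $g(X_i)$. Concretely, your inductive step yields $\eps\sum_{i=K}^{k}\tau^{k+1-i}g(X_i)+\eps\,g(X_{k+1})$, whose $i=K$ term carries $\tau^{k+1-K}$, whereas the target formula assigns that term the weight $\tau^{0}=1$. So the induction does not close ``in the exact form stated.''

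To be fair, this is a defect of the corollary as printed rather than of your idea: with the kernel written as $\tau^{k-i}$ the induction you describe goes through verbatim, and the paper's one-line proof is no more careful than yours. But passing from the correct unrolled bound to the printed one would need extra input --- for instance $\tau<1$ together with monotonicity of $g(X_i)$, via a rearrangement argument --- and neither is available here: the paper itself points out that $\tau=2\rho(M)$ need not lie below $1$, and no monotonicity of the derivative errors is established. You should either prove the statement with $\tau^{k-i}$ in place of $\tau^{i-K}$, or supply the missing comparison between the two sums; as written, the claim that careful bookkeeping ``reassembles the right-hand side in the exact form stated'' is the one assertion in your proposal that is false.
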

\begin{remark}$~$
\begin{itemize}

\item Let us  mention that our convergence rate is different from existing convergence rate in the literature of differentiation.
In fact, we prove a local convergence while most work try to prove a global convergence. Moreover,  our rate is $\tau$ with an vanishing error term.  It is better  to bound  it (strictly) with $1$.  Work arounds like \cite{mehmood19,iutzeler24} have a convergence rate  usually of the form  $\max\Ba{\rho(M),q_x}$ where $q_x$ is said to be the convergence rate of the sequence $\seq{\xk(\theta)}$ and without the error term in \eqref{cor:conv-rat-der}  which is quite important to notice. 
\item We  want to answer the question why our convergence rates is so different from previous work ? Our results rely on a careful analysis of the problem without bounding  the second order derivative from the beginning. We mean by ``careful analysis'' of the problem take into account all the premises of the problem without adding additional hypothesis which in this case is superfluous.
\end{itemize}
\end{remark}
\begin{proof}$~$This result is a  consequence of the previous Theorem by summing from  $K$ to $k$. 
\end{proof}

\section{Numerical Experiments}\label{sec_numexp}
In this section, we highlight our results by examining the numerical behavior of the iterates and derivatives for an application. 
All the experiments were carry out using Matlab software (\cite{matlab22}). 

We consider that we want solve the following linear inverse problem 
\begin{equation}\label{eq:inv-probl}\tag{$\calP_{\mathrm{inv}}$}
\begin {cases}
\text{Recover a real vector or signal $\avx(\theta)\in \bbR^n,$  from }\\
y(\theta)= A\avx(\theta)\in \bbR^m \qwhereq \avx(\theta)=\frac{1}{2}\tilde{x}\theta^2, 
\end{cases}
\end{equation}
where we have $A\in \bbR^{m\times n}$ is the measurements matrix whose rows $(a_r)_{r\in[m]}$ are the measurements vectors.  Our parameter   $\theta\in\bbR$  can be seen as a scaling coefficient of the fixed signal  $\tilde{x}\in\bbR^n$ which is also a fixed vector.

\subsection{Solving using Least-squares or linear regression}\label{sec:least-square}
To solve \eqref{eq:inv-probl}, we consider the following parametric optimization problem  
\begin{equation}\label{eq:param-ls}
\forall \theta\in\bbR, \quad \min_{x\in\bbR^n}f(x,\theta)=\frac{1}{2}\normm{y(\theta)-Ax}^2.
\end{equation}
 For all $ x\in\bbR^n, \theta\in\bbR$, we have 
\begin{align}\label{eq:derivative}
\nabla_x f(x,\theta)=-\transp{A}\Ppa{y(\theta)-Ax},\quad\nabla^2_x f(x,\theta)=\transp{A}A,\qandq\nabla^2_{x\theta} f(x,\theta)=-\pa{\transp{A}A}\tilde{x}\theta.
\end{align}
From \eqref{eq:derivative}, we have that $\forall \theta \in \Theta$ 
\begin{align}
\normm{\nabla_x f(x,\theta)-\nabla_x f(z,\theta)} \leq \normm{A} \normm{x-y},
\end{align}
thus the Lipschitz-coefficient is given by $L=\normm{A}.$
 
We recall that our goal is to solve \ref{eq:param-ls} using the inertial scheme Algorithm~\ref{eq:inertial-methods} and then differential through the latter. Given any smooth function of $\theta$ as an initial point: $\forall \theta\in \bbR, \xo(\theta)\in\bbR^m$, we define $X_0(\theta)=\begin{pmatrix} \xo(\theta)\\ \xo(\theta)\end{pmatrix}$ then the automatic differentiation produces the following sequence:
\begin{multline}
 \forall k\in\bbN, \quad  \hspace{-0.2cm} \partial_{\theta}X_{k+1}(\theta)= \begin{pmatrix} (1+\ak)\Id-\gak(1+\bk)\transp{A}A &-\ak\Id+\gak\bk\transp{A}A \\
		 \Id  & 0\end{pmatrix}\partial_{\theta}X_k(\theta)\\+\begin{pmatrix} \gak\pa{\transp{A}A}\tilde{x}\theta  \\  
		0\end{pmatrix}.
\end{multline}
Thanks to  the explicit formula \eqref{eq:expli-form}, we get 
  \begin{equation}
 \hspace{-0.8cm}\partial_{\theta}X^{\star}\pa{\theta}=\begin{pmatrix} -a\Id+\gamma(1+b)\transp{A}A &a\Id-\gamma b\transp{A}A \\ -\Id  & 
		\Id\end{pmatrix}^{-1}\hspace{-0.2cm}\begin{pmatrix} \gamma\pa{\transp{A}A}\tilde{x}\theta  \\  
		0\end{pmatrix}.
\end{equation}
\begin{remark} Although this Ordinary least-square case looks very basic, it is important  and as highlights, the computations are done handily so that we can observe the behaviour of our  differentiation procedure throughout.   
\end{remark}
\vspace{-0.8cm}
\paragraph{Experience setup} 
In our numerical simulations, we suppose that $\forall r \in [m],  a_r \stackrel{\footnotesize{\text{\iid}}}{\sim}\calN(0,\Id)$ and the Lipschitz coefficient of the gradient $L\approx\sqrt{m}$. Moreover, we choose for simplicity that  $\xo\in\bbR^n,$ a constant function of $\theta$ which implies that $\partial_{\theta}\xo(\theta)=0$. For each simulation, we run the inertial scheme for $400$ iterations and  we compare  the results with the solution $\xsol(\theta)$ and his derivatives $\partial_{\theta}\xsol(\theta)$. Finally,  we select the parameters of our algorithm  to get two different scenario: 
\begin{itemize}
\item \textbf{Case 1:} $\gak\equiv\frac{1}{L-2/k}, \ak=\bk=\frac{k-1}{k+20}.$ We easily get that $a=b=1$ and $\gamma=1/L.$
\item \textbf{Case 2:} $\gak\equiv\frac{1}{L-2/k}, \ak=\bk=0,$ with  $a=b=0$ and $\gamma=1/L.$
\end{itemize}
\begin{figure}[htbp]
\centering
\includegraphics[trim={0cm 8.5cm 0cm 8.5cm},clip,width=1\textwidth]{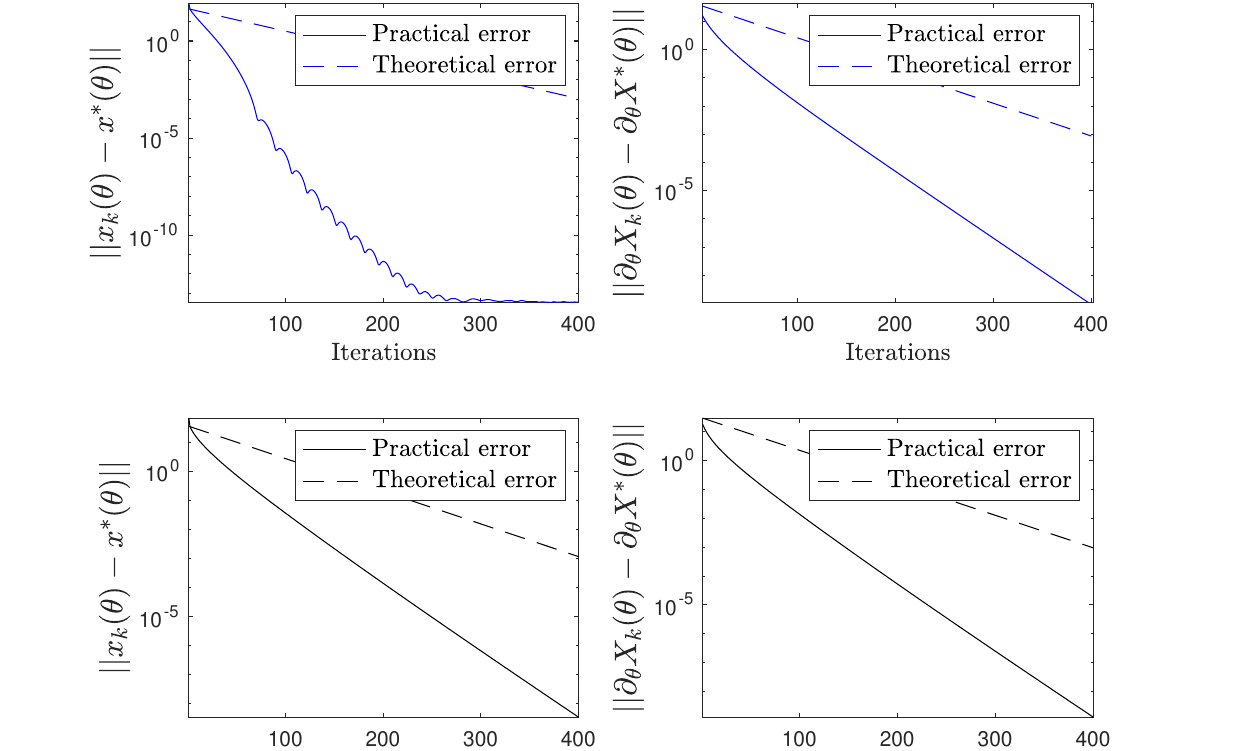}
 \caption{Automatic differentiation of the inertial methods (Case 1=inertial method) and (Case 2=  Gradient  descent).}
\label{fig: linea-inv-vstep}
\end{figure} 

\paragraph{Observations }  In Figure~\ref{fig: linea-inv-vstep}, we  displayed the result of our numerical experiments for solving the problem \eqref{eq:inv-probl}  with the least-square formulation \eqref{eq:param-ls}. The first line in blue is the result when we use a standard inertial method. On the left hand side, we have the  error of the of the inertial schemes with respect to the number of iterations and in dashed line we have the Theoretical error which is of course not optimistic. We can  observe with the experiments oscillations which characterize inertial methods. On the right hand side, the error of the differentiation methods with respect to the number of iterations. In dashed line, the theoretical linear convergence without the error term and in plain line the observed error.  As predicted by Theorem~\ref{thm:conv-rat-der}, the linear convergence does not occur from the start as described by many automatic differentiation daily users. We have a small regime without the linear convergence. In this regime, the error term is large which impact the convergence of the differentiation scheme.  We can see that the error is not linear. But after a few iterations the differentiation procedure enters a linear convergence regime. And  this is what we have highlighted in our Theorem~\ref{thm:conv-rat-der}. Moreover, as predicted in Remark~\ref{rmk:inert-deriv}, after differentiation they are no more oscillations, we have a linear convergence after few iterations. On the second scenario, we solved the problem using the gradient descent and plotted the results in black. We  can observe in contrast to the previous case that we  do not have oscillations which also describe this scheme. We have made the same experiments, and on the left hand side, after few iterations, the sequence enters a linear convergence rate. On the right side, the differentiation of the gradient descent show the same behaviour as the  sequence generated by the gradient descent. Theses numerical experiments confirm all our theoretical results.  

\subsection{The Rockefellar's log-exponential}\label{sec:logexp}
Let us start by briefly introduce what is called the log-exponential function  \ie 
\begin{equation}
\forall x \in \bbR^m,\quad  \logexp(x)\eqdef \log\BPa{e^{x_1}+\cdots+e^{x_m}}.
\end{equation} 
According to \cite[Example~2.16]{rockafellar_variational_1998}, the log-exponential function is only a convex function. However, in the context of solving \eqref{eq:inv-probl}, we consider  the following parametric optimization problem  
\begin{equation}\label{eq:param-logexp}
\forall \theta\in\bbR, \quad \min_{x\in\bbR^n}f(x,\theta)=\frac{1}{2}\BPa{\logexp\Ppa{y(\theta)-Ax}}^2,
\end{equation}
To simplify the notations, we denote by $\sigma(y)\eqdef\sum_{j=1}^me^{y_j}, \forall y\in \bbR^m$. We rewrite \eqref{eq:param-logexp} as
\[
\forall \theta\in\bbR,\quad f(x,\theta)=\frac{1}{2}\BPa{\log\pa{\sigma\Ppa{y(\theta)-Ax}}}^2.
\]
We have that,  for all $ x\in\bbR^n, \theta\in\bbR$  we have that the gradient  of $f$ is given
\begin{equation}\label{eq:gradient-formula}
\nabla_x f(x,\theta)=\frac{-\logexp\Ppa{y(\theta)-Ax}}{\sigma\Ppa{y(\theta)-Ax}}\transp{A}\nabla\sigma\Ppa{y(\theta)-Ax}.
\end{equation}
 The second order derivatives of interest are respectively
\begin{multline}\label{eq:hes_xx}
\nabla^2_xf(x,\theta)=\frac{\logexp\Ppa{y(\theta)-Ax}-1}{\sigma\Ppa{y(\theta)-Ax}^2}\transp{\BPa{\nabla\sigma\Ppa{y(\theta)-Ax}A}}\nabla\sigma\pa{y(\theta)-Ax}A\\-\frac{\logexp\Ppa{y(\theta)-Ax}}{\sigma\pa{y(\theta)-Ax}}\transp{A}\BPa{\diag{\nabla\sigma\pa{y(\theta)-Ax}}}A,
\end{multline}
 and 
\begin{multline}\label{eq:hes_xt}
\nabla^2_{x\theta}f(x,\theta)=\frac{\logexp\Ppa{y(\theta)-Ax}-1}{\sigma\Ppa{y(\theta)-Ax}^2}\transp{\BPa{\nabla\sigma\Ppa{y(\theta)-Ax}A}}\nabla\sigma\pa{y(\theta)-Ax}\tilde{x}\theta\\-\frac{\logexp\Ppa{y(\theta)-Ax}}{\sigma\pa{y(\theta)-Ax}}\transp{A}\BPa{\diag{\nabla\sigma\pa{y(\theta)-Ax}}}\tilde{x}\theta.
\end{multline}
where 
$
\quad\nabla\sigma(y)=\begin{pmatrix}e^{y_1}\\\vdots\\e^{y_m}\end{pmatrix}.$
As previously, given any smooth function of $\theta$ as an initial point:\\
 $\forall \theta\in \bbR, \xo(\theta)\in\bbR^m$, we define $X_0(\theta)=\begin{pmatrix} \xo(\theta)\\ \xo(\theta)\end{pmatrix}$ then the derivative sequence is the following: $ \forall k\in\bbN, $
\[
\ybk(\xk,\xkm)=\xk\pa{\theta}+\bk\pa{\xk\pa{\theta}-\xkm\pa{\theta}},
\]
\begin{multline}
 \hspace{-0.2cm} \partial_{\theta}X_{k+1}(\theta)= \begin{pmatrix} (1+\ak)\Id-\gak(1+\bk)\nabla^2_xf\pa{\ybk\pa{\xk,\xkm},\theta} &-\ak\Id+\gak\bk\nabla^2_xf\pa{\ybk\pa{\xk,\xkm},\theta} \\
		 \Id  & 0\end{pmatrix}\\\partial_{\theta}X_k(\theta)+\begin{pmatrix}-\gak\nabla_{x\theta}^2f\pa{\ybk\pa{\xk,\xkm},\theta} \\  
		0\end{pmatrix}.
\end{multline}
Where the different formulas are in \eqref{eq:hes_xx} and \eqref{eq:hes_xt}. Finally, we have
 \begin{equation}
 \hspace{-0.8cm}\partial_{\theta}X^{\star}\pa{\theta}=\begin{pmatrix} -a\Id+\gamma(1+b)\nabla_x^2f(\xsol(\theta),\theta) &a\Id-\gamma b\nabla_x^2f(\xsol\pa{\theta},\theta) \\ -\Id  & 
		\Id\end{pmatrix}^{-1}\hspace{-0.2cm}\begin{pmatrix} -\gamma\nabla_{x\theta}^2f(\xsol,\theta)  \\  
		0\end{pmatrix}.
\end{equation}

\paragraph{Observations }   We have re-introduced the log-exponential function in view to solve inverse problem as an alternative to logistic regression commonly used in the learning literature. In this work, we would like to do the same thing as the previous  example and  without any mathematical background and we encounter technical  difficulties.  Indeed,  we do not have strong convexity or unicity of the minimizer. Therefore, our theoretical results are not directly applicable. However, we observe that  for any $\theta \in \Theta$ at $\xbar(\theta)$
\begin{align*}
\nabla f(\xbar(\theta))=0,\qandq  \nabla_x^2 f(\xbar(\theta),\theta)=-\frac{e}{m}\transp{A}A\preceq0.
\end{align*}
This  suggests that $\xbar(\theta)$ is a particular critical point of $f(x,\theta)$. We leave the study of the properties of this function for solving inverse problem as a future work.

	\begin{appendices}
\section{Proofs for the convergence of the sequence}\label{sec:pr-conv-seq}

\subsection{Intermediate result}\label{sec:int-conv}
We have the following Lemma.
\begin{lemma}\label{lem:inter-conv} Let us define the following matrices, for $k\in\bbN,$ 
\begin{align*}
&M_{k,1}\eqdef[(1+b)\pa{G^k_{\theta}-G_{\theta}},-b\pa{G^k_{\theta}-G_{\theta}}],\\
&M_{k,2}\eqdef[\Ppa{\pa{\ak-\bk}-\pa{a-b}}\Id+\pa{\bk-b}G^k_{\theta},-\Ppa{\pa{\ak-\bk}-\pa{a-b}}\Id-\pa{\bk-b}G^k_{\theta}]. 
\end{align*}
Then there exists $K$ large enough such that
\[
\normm{M_{k,2}\Ppa{X_{k}(\theta)-X^{\star}(\theta)}}=\normm{M_{k,1}\Ppa{X_{k}(\theta)-X^{\star}(\theta)}}=o\Ppa{\normm{X_{k}(\theta)-X^{\star}(\theta)}}=0.
\]
\end{lemma}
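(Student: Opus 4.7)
The plan is to observe that $M_{k,1}$ and $M_{k,2}$ arise as the two natural pieces of the block-row error $M_k - M$, where $M_k$ is defined in \eqref{eq:Jsol} and $M$ in \eqref{eq:def-mat}, and then to use the convergence of every iteration-dependent ingredient to show that their operator norms vanish, which combined with submultiplicativity yields the claimed little-$o$ bound. The natural companion of $G_\theta$ at iteration $k$ is $G_\theta^k \eqdef \Id - \gak\nabla_x^2 f(\ybk(\xk(\theta),\xkm(\theta)),\theta)$, since in that language $M_k$ becomes the same block matrix as $M$ but with $G_\theta^k$ in place of $G_\theta$ and $\ak,\bk$ in place of $a,b$; the two matrices $M_{k,1},M_{k,2}$ are then literally the two contributions to the top block row of $M_k - M$, sorted by the source of the error (variation of $G_\theta^k$ around $G_\theta$ versus variation of $\ak,\bk$ around $a,b$).

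To show that $G_\theta^k \to G_\theta$, I would first apply Proposition~\ref{pro:conv-iters} to get $\xk(\theta)\to\xsol(\theta)$ and $\xkm(\theta)\to\xsol(\theta)$; since $\bk\in[0,\bbar]$, the extrapolated point $\ybk(\xk,\xkm) = \xk + \bk(\xk-\xkm)$ also converges to $\xsol(\theta)$; continuity of $\nabla_x^2 f$ in its first argument (Premise~\ref{prem_A1}) combined with $\gak\to\gamma$ (Premise~\ref{prem_C1}) then delivers $\normm{G_\theta^k - G_\theta}\to 0$. Reading off the block entries of $M_{k,1},M_{k,2}$ and using the triangle inequality yields constants $c_1,c_2>0$ such that $\normm{M_{k,1}}\le c_1\normm{G_\theta^k - G_\theta}$ and $\normm{M_{k,2}}\le c_2\bigl(|(\ak-\bk)-(a-b)|+|\bk-b|\,\normm{G_\theta^k}\bigr)$, and Premise~\ref{prem_C1} together with the previous step forces both right-hand sides to $0$; hence for any prescribed tolerance there is a $K$ beyond which $\normm{M_{k,i}}$ is as small as we wish for $i=1,2$.

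Submultiplicativity of the operator norm then gives $\normm{M_{k,i}(X_k(\theta)-X^\star(\theta))}\le \normm{M_{k,i}}\cdot\normm{X_k(\theta)-X^\star(\theta)}$, so the ratio $\normm{M_{k,i}(X_k(\theta)-X^\star(\theta))}/\normm{X_k(\theta)-X^\star(\theta)}$ tends to zero, which is exactly the little-$o$ claim; the absolute quantity then vanishes on top because of Proposition~\ref{pro:conv-iters}, matching the trailing ``$=0$'' in the statement. I do not expect a real obstacle here: the argument is a continuity-plus-submultiplicativity package built on top of already established convergence results. The only things to keep track of are that the superscript $k$ in $G_\theta^k$ denotes an iteration-dependent approximation of $G_\theta$ rather than a matrix power, and that boundedness of $\bk$ is what allows us to pass from convergence of $\xk,\xkm$ to convergence of $\ybk(\xk,\xkm)$.
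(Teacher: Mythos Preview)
Your argument is correct: showing $\normm{M_{k,i}}\to 0$ via convergence of $\ak,\bk,\gak$ and of $G_\theta^k$, and then invoking submultiplicativity, is exactly the right mechanism for the little-$o$ claim. The paper itself does not give a proof but simply defers to \cite[Proposition~27]{liang_activity_2017}, so there is no in-paper argument to compare against.

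One small discrepancy worth flagging: in the proof of Proposition~\ref{pro:local-lin} (where the lemma is actually used and where $G_\theta^k$ is defined) the paper sets $G_\theta^k=\Id-\gak\nabla_x^2 f(\xsol,\theta)$, i.e.\ evaluated at the limit point $\xsol$ rather than at the running extrapolate $\ybk(\xk,\xkm)$ as you do. With the paper's definition, $G_\theta^k-G_\theta=(\gamma-\gak)\nabla_x^2 f(\xsol,\theta)$ and the convergence $G_\theta^k\to G_\theta$ follows from $\gak\to\gamma$ alone, without needing continuity of $\nabla_x^2 f$ or convergence of $\ybk$. Your version also works (and is arguably the more natural reading in the absence of an explicit definition in the lemma), but it is a slightly heavier route to the same endpoint.
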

\begin{proof} The proof of Lemma~\ref{lem:inter-conv}  can be found in a more general form in \cite[Proposition~27]{liang_activity_2017}.
\end{proof}
\subsection{Proof of Lemma~\ref{lem:LipFk}}\label{prf:lem:LipFk}
\begin{proof}$~$ 
The proof of the first claim comes directly from the Premise~\ref{prem_A1}, the definition of the scheme \eqref{eq:mapping} and straightforward differentiation.

 For the second claim,  $\forall k\in\bbN,$   for any  $(X_1,\theta_1)$ and $(X_2,\theta_2)$ in $\bbR^n\times\bbR^n\times\Theta,$ we have that 
 \begin{align*}
 F_k(X_1,\theta_1)-F_k(X_2,\theta_2)&=F_k(x_1,z_1,\theta_1)-F_k(x_2,z_2,\theta_2)\\
 &=\begin{pmatrix}
\Ppa{\yak(x_1,z_1)-\yak(x_2,z_2)}-\gak\Ppa{\nabla f(\ybk(x_1,z_1),\theta_1)-\nabla f(\ybk(x_2,z_2),\theta_2)}\\
x_1-x_2
\end{pmatrix}
 \end{align*}
therefore by triangular inequality we get,
\begin{multline*}
\normm{F_k(X_1,\theta_1)-F_k(X_2,\theta_2)}^2\leq\normm{x_1-x_2}^2+ \normm{\yak(x_1,z_1)-\yak(x_2,z_2)}^2\\+\gak^2\normm{\nabla f(\ybk(x_1,z_1),\theta_1)-\nabla f(\ybk(x_2,z_2),\theta_2)}^2
\end{multline*}
On one hand we have  that 
\[
\normm{\yak(x_1,z_1)-\yak(x_2,z_2)}^2\leq\pa{1+\ak}^2\normm{x_1-x_2}^2+\ak^2\normm{z_1-z_2}^2,
\]
On the other hand, we use the fact that $\nabla_x f$ is $L-$Lipschitz continuous and the positivity of the norm to obtain that
\[
\normm{\nabla f(\ybk(x_1,z_1),\theta
_1)-\nabla f(\ybk(x_2,z_2),\theta_2)}^2\leq L^2\Ppa{\pa{1+\bk}^2\normm{x_1-x_2}^2+\bk^2\normm{z_1-z_2}^2+\normm{\theta_1-\theta_2}^2}. 
\]
We sum each bounds and we arrive at  
\begin{multline*}
\normm{F_k(X_1,\theta_1)-F_k(X_2,\theta_2)}^2\leq \normm{x_1-x_2}^2\Ppa{1+\pa{1+\ak}^2+(\gak L)^2\pa{1+\bk}^2}+\\\normm{z_1-z_2}^2\Ppa{\ak^2
+\bk^2\pa{L\gak}^2}+\normm{\theta_1-\theta_2}^2,
\end{multline*}
hence we obtain,
\begin{equation*}
\normm{F_k(X_1,\theta)-F_k(X_2,\theta)}\leq\sqrt{\Ppa{1+\pa{1+\ak}^2+(\gak L)^2\pa{1+\bk}^2}}\sqrt{\normm{X_1-X_2}^2+\normm{\theta_1-\theta_2}^2}.
\end{equation*}
\end{proof}

\subsection{Proof  of Proposition~\ref{pro:local-lin}}\label{prf:pro:local-lin}
\begin{proof}$~$
\begin{itemize}

\item Claim~\ref{pro:local-lin1} easily follows  from Premise~\ref{prem_C1}. Indeed, we have that $\ak\to a$, $\bk\to b$ and $\gak \to \gamma$. Since $a,b\in[0,1]^2$ and $\gamma\in]0,2/L[$ thus  $(1+\ak)\to(1+a)$ , $\gak(1+\bk)\to\gamma(1+b)$ and $\gak\bk\to\gamma b$. This implies that
\[
 (1+\ak)\Id-\gak(1+\bk)\nabla_x^2f(\xsol,\theta)\to  (1+a)\Id-\gamma(1+b)\nabla_x^2f(\xsol,\theta),
\]
and 
\[
-\ak\Id+\gak\bk\nabla_x^2f(\xsol,\theta)\to -a\Id+\gamma b\nabla_x^2f(\xsol,\theta).
\]
hence $M_k\to M.$

\item For Claim~\ref{pro:local-lin2}, let $\sigma$ be an eigenvalue of $M$ related to the eigenvector $\begin{pmatrix}r_1\\r_2\end{pmatrix}$ then we recall that $M=\begin{pmatrix} (a-b)\Id+(1+b)G_{\theta} &-(a-b)\Id-bG_{\theta} \\ \Id  & 
		0\end{pmatrix}$.  Let $\eta_{\theta}$ be an eigenvalue of $G_{\theta},$  we have 			
\begin{align*}
M\begin{pmatrix}r_1\\r_2\end{pmatrix}=\begin{pmatrix} (a-b)r_1+(1+b)G_{\theta}r_1 -(a-b)r_2-bG_{\theta}r_2 \\ 
		r_1\end{pmatrix}=\sigma\begin{pmatrix}r_1\\r_2\end{pmatrix}.
\end{align*} 
The second line of this identity means that $r_1=\sigma r_2$, we plug this in the first identity to get the following quadratic equation in $\sigma$
\begin{equation}\label{eq:quad-sigma}
\sigma^2-\Ppa{\pa{a-b}+\pa{1+b}\eta_{\theta}}\sigma+\pa{a-b}+b\eta_{\theta}=0.
\end{equation}		
Now, we have to solve \eqref{eq:quad-sigma} in terms of $\sigma$, this make look tedious at first sight but it is similar to the proof of \cite[Proposition~17]{liang_activity_2017}. They  found that the eigenvalues  $\sigma$ satisfying  \eqref{eq:quad-sigma} are such that $\rho(M)=\abs{\sigma}<1$ if and only if Premise~\ref{prem_C2} holds true.

\item  Thanks to  Claim~\ref{pro:local-lin2}, to prove claim~\ref{pro:local-lin3}, we only have  to prove that their exist $K$ large enough such that
\[
\forall k\geq K, \quad X_{k+1}(\theta)-X^{\star}(\theta)=M\Ppa{X_{k}(\theta)-X^{\star}(\theta)}.
\]
Due to Remark~\ref{rem:conv-iters}, the sequence $\seq{X_{k}(\theta)}$ converges to the solution $X^{\star}(\theta)$  there exists  $K\in\bbN$ sufficiently large such that $X_{k}(\theta)$ is close enough to $X^{\star}(\theta)$. For $k\geq K,$ we have that  $\forall\theta\in\Theta$, 
\begin{align*}
\xkp\pa{\theta}-\xsol\pa{\theta}&=\yak-\xsol-\gak\nabla f\pa{\ybk,\theta},
\end{align*}
By the Premise~\ref{prem_A}, we have  that the function $f$ is a $C^2-$smooth function, thus it is also twice differentiable in the extended sense according to \cite[Theorem~13.2]{rockafellar_variational_1998}. Hence, by \cite[Definition~13.1]{rockafellar_variational_1998} (Definition of twice diffrentiability in the extended sense), we have
\begin{align*}
\xkp\pa{\theta}-\xsol\pa{\theta}&=\yak-\xsol-\gak\BPa{\nabla_x^2 f\pa{\xsol,\theta}\pa{\ybk-\xsol}+o\pa{\normm{\ybk-\xsol}}},
\end{align*}
where we have used the optimality condition that $\nabla f\pa{\xsol,\theta}=0$. Let us observe that 
\begin{align*}
\normm{\ybk-\xsol}&=\normm{\pa{1+\ak}\Ppa{\xk-\xsol}-\ak\Ppa{\xkm-\xsol}},\\
&\leq 2\Ppa{\normm{\xk-\xsol}+\normm{\xkm-\xsol}},\\
&\leq 4\eps,
\end{align*} 
where we have chosen $K$ large enough such that $\forall k\geq K $ we get that $\xk$ and $\xkm$ are $\eps-$ sufficiently close to $\xsol$. We get  similarly that $\yak$ is sufficiently close to $\xsol,$ hence we have $o\pa{\normm{\ybk-\xsol}}=0.$ 

Now, we expend $\yak$ and $\ybk$ to get that  
\begin{align*}
\xkp\pa{\theta}-\xsol\pa{\theta}=&\Big[\pa{1+\ak}\Id-\gak\pa{1+\bk}\nabla^2_xf\pa{\xsol,\theta}\Big]\pa{\xk-\xsol}\\&-\Big[\ak\Id-\gak\bk\nabla^2_xf\pa{\xsol,\theta}\Big]\pa{\xkm-\xsol}\\
=&\Big[\pa{\ak-\bk}\Id-\gak\pa{1+\bk}G^k_{\theta}\quad -\pa{\ak-\bk}\Id+\gak\bk G^k_{\theta}\Big]\Ppa{X_k(\theta)-X^{\star}(\theta)},
\end{align*}
where we have denoted $\forall k\in\bbR^n,  G^k_{\theta}=\Id-\gak\nabla^2_xf\pa{\xsol,\theta}$. We get by adding an appropriate line  the following
\begin{align*}
X_{k+1}(\theta)-X^{\star}(\theta)=&\begin{pmatrix}\pa{\ak-\bk}\Id-\gak\pa{1+\bk}G^k_{\theta}&-\pa{\ak-\bk}\Id+\gak\bk G^k_{\theta}\\
\Id&0
\end{pmatrix}\Ppa{X_k(\theta)-X^{\star}(\theta)},\\
=&\BPa{M+\begin{bmatrix}M_{k,1}\\0\end{bmatrix}+\begin{bmatrix}M_{k,2}\\0\end{bmatrix}}\Ppa{X_k(\theta)-X^{\star}(\theta)},
\end{align*}
where we have denoted 
\begin{align*}
&M_{k,1}\eqdef[(1+b)\pa{G^k_{\theta}-G_{\theta}},-b\pa{G^k_{\theta}-G_{\theta}}],\\
&M_{k,2}\eqdef[\Ppa{\pa{\ak-\bk}-\pa{a-b}}\Id+\pa{\bk-b}G^k_{\theta},-\Ppa{\pa{\ak-\bk}-\pa{a-b}}\Id-\pa{\bk-b}G^k_{\theta}].
\end{align*}
Finally, we have 
\begin{align*}
\forall k\in\bbN,\normm{X_{k+1}(\theta)-X^{\star}(\theta)}\leq &\rho(M)\normm{X_{k}(\theta)-X^{\star}(\theta)}+\normm{M_{k,1}\Ppa{X_{k}(\theta)-X^{\star}(\theta)}}\\
&+\normm{M_{k,2}\Ppa{X_{k}(\theta)-X^{\star}(\theta)}},\\
\leq& \rho(M)\normm{X_{k}(\theta)-X^{\star}(\theta)},
\end{align*}
where we used the Lemma~\ref{lem:inter-conv} which state that 
\[
\normm{M_{k,2}\Ppa{X_{k}(\theta)-X^{\star}(\theta)}}=\normm{M_{k,1}\Ppa{X_{k}(\theta)-X^{\star}(\theta)}}=o\Ppa{\normm{X_{k}(\theta)-X^{\star}(\theta)}}=0.
\]
\end{itemize}
\end{proof}

\section{Toolbox for  differentiation}\label{sec:toolbox-ad}
Consider an infinite interative process, according to Definition~\ref{def:codelist} given by the  triplet  $\calJ=\Ppa{\seq{\Phi_k},\Theta,\xo}$. We suppose that the generated sequence $\seq{\xk(\cdot)}$ is differentiable and moreover  derivative stable according to Definition~\ref{def:der-sta}. By the differentiability hypothesis  and the rule of derivation of composed functions, we have 
\begin{equation}\label{eq:pdcf-rule}
\forall k \in \bbN, \quad\partial_{\theta}\xkp(\theta)=J_x\Phi_k(\xk(\theta),\theta)\partial_{\theta}\xk(\theta)+J_{\theta}\Phi_k(\xk(\theta),\theta).
\end{equation}
 As suggested by \eqref{eq:pdcf-rule}, it turns out that that the convergence of the derivative depends on the Jacobians operator $J_{x}\Phi_k$ and $J_{\theta}\Phi_k$. Before we state the main Theorem,  let make the following premise. 
 \begin{premise}\label{prem_apend} $~$
\begin{enumerate}[label=(D.\arabic*)]
 \item \label{prem_D1} The sequence of iterative maps $\seq{\Phi_k}$ converges to a certain function $\Phi$,
 \item\label{prem_D2}  there exists a limit function $\avx(\theta)$ such that:  $\forall \theta\in\Theta,\quad\avx(\theta)=\Phi(\avx(\theta), \theta),$
 \item \label{prem_D3} $J_x\Phi_k\to J_x\Phi\qandq J_{\theta}\Phi_k\to J_{\theta}\Phi,$
 \item \label{prem_D4} $\forall\theta\in\Theta, \quad \rho\Ppa{J_x\Phi\pa{\avx(\theta),\theta}}<1.$
\end{enumerate}
\end{premise} 
\begin{remark}\label{rem:prem_apend}$~$
The first premise means that the sequence of the iterative maps $\seq{\Phi_k}$
 has a limit which we denote $\Phi$, then the  second premise  is also natural since we are examining if the derivative has a limit. The third premise is more constructive and is deduced from \eqref{eq:pdcf-rule}. Since, we want the sequence of derivative to have a limit, it sufficient in this setting to make the   hypothesis that  partial Jacobians sequences  also converges. The last condition is an invertibility condition on the the squared matrice $J_{x}\Phi(\avx(\theta),\theta)$ to get  an explicit formula for $\partial_{\theta}\avx(\theta).$
\end{remark}
 
 We can state the convergence theorem as  the following result.
 
\begin{theorem}\label{thm:deriv-stable} Let consider the following infinite iterative process $\calJ=\Ppa{\seq{\Phi_k},\Theta,\xo}$ according to Definition~\ref{def:codelist},  where $\Theta$ is an open subset of  $\bbR^m$ and $\avx(\cdot)$ the differentiable limit of $\seq{\xk}.$ 

Under the Premise~\ref{prem_apend}, we have that the sequence $\seq{\xk}$ is derivative-stable and moreover we can write the limit derivative as: 
\begin{centerbox}{black}{}y
\begin{equation}\label{eq:deriv-limit}
\forall \theta\in\Theta, \quad \partial_{\theta}\avx\pa{\theta}=\Ppa{\Id-J_x\Phi\pa{\avx(\theta),\theta}}^{-1}J_{\theta}\Phi\pa{\avx(\theta),\theta}.
\end{equation}
\end{centerbox}
\end{theorem}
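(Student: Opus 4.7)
The argument has two parts: obtain the explicit formula for $\partial_\theta\bar{x}(\theta)$, then show convergence of the sequence of derivatives to it. For the formula, the plan is to differentiate the fixed-point equation in Premise~\ref{prem_D2}. Since $\bar{x}$ is assumed differentiable and $\Phi$ is continuously differentiable (as a limit of the $C^1$ maps $\Phi_k$), the chain rule yields
\[
\partial_\theta \bar{x}(\theta) = J_x\Phi(\bar{x}(\theta),\theta)\,\partial_\theta\bar{x}(\theta) + J_\theta\Phi(\bar{x}(\theta),\theta).
\]
Premise~\ref{prem_D4} gives $\rho(J_x\Phi(\bar{x}(\theta),\theta))<1$, so $\Id - J_x\Phi(\bar{x}(\theta),\theta)$ is invertible, and solving for $\partial_\theta\bar{x}(\theta)$ gives \eqref{eq:deriv-limit}.

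\textbf{Error recursion.} Set $e_k \eqdef \partial_\theta x_k(\theta) - \partial_\theta\bar{x}(\theta)$. Subtracting the fixed-point differentiation identity from \eqref{eq:pdcf-rule} and inserting $\pm J_x\Phi_k(x_k(\theta),\theta)\partial_\theta\bar{x}(\theta)$ gives
\[
e_{k+1} = J_x\Phi_k(x_k(\theta),\theta)\,e_k + r_k,
\]
where
\[
r_k \eqdef \bigl[J_x\Phi_k(x_k(\theta),\theta) - J_x\Phi(\bar{x}(\theta),\theta)\bigr]\partial_\theta\bar{x}(\theta) + \bigl[J_\theta\Phi_k(x_k(\theta),\theta) - J_\theta\Phi(\bar{x}(\theta),\theta)\bigr].
\]
Using Premise~\ref{prem_D3} together with $x_k(\theta)\to\bar{x}(\theta)$ and continuity of the limiting Jacobians, I would deduce $\|r_k\|\to 0$.

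\textbf{Contraction and conclusion.} Pick any $\rho\in(\rho(J_x\Phi(\bar{x}(\theta),\theta)),1)$. By the standard fact that for any matrix $A$ and $\eps>0$ there exists a (submultiplicative) norm with $\|A\|\leq \rho(A)+\eps$, I would select a norm $\nrm{\cdot}$ on $\bbR^{n\times m}$ such that $\nrm{J_x\Phi(\bar{x}(\theta),\theta)}\leq \rho$. By Premise~\ref{prem_D3} and $x_k(\theta)\to\bar{x}(\theta)$, there exists $K$ large enough with $\nrm{J_x\Phi_k(x_k(\theta),\theta)}\leq \rho$ for all $k\geq K$. Then
\[
\nrm{e_{k+1}}\leq \rho\,\nrm{e_k}+\nrm{r_k},\qquad k\geq K,
\]
and iterating this inequality yields
\[
\nrm{e_k}\leq \rho^{k-K}\nrm{e_K} + \sum_{j=K}^{k-1}\rho^{k-1-j}\nrm{r_j}.
\]
Since $\rho<1$ and $\nrm{r_j}\to 0$, a standard Cesaro-type argument (e.g.\ split the sum at an index $L$ where $\nrm{r_j}$ is small and use geometric decay of the tail of $\rho^{k-1-j}$) yields $\nrm{e_k}\to 0$, hence $\partial_\theta x_k(\theta)\to\partial_\theta\bar{x}(\theta)$, which is derivative stability.

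\textbf{Main obstacle.} The delicate step is the passage from the spectral radius bound in Premise~\ref{prem_D4} to an operator norm bound that is uniform along the sequence $\{J_x\Phi_k(x_k(\theta),\theta)\}$. The norm furnished by the standard lemma depends on the limit matrix $J_x\Phi(\bar{x}(\theta),\theta)$, and one must then use Premise~\ref{prem_D3} plus continuity to transfer the bound to the perturbed matrices along the orbit; once this is in place the rest is the routine perturbed geometric recursion above.
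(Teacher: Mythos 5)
Your proof is correct; note first that the paper itself states Theorem~\ref{thm:deriv-stable} as a toolbox result \emph{without proof} (the appendix passes directly from its statement to the proofs of Theorems~\ref{thm:gim-stable} and~\ref{thm:conv-rat-der}), so there is no in-paper argument to compare against. Your route is the classical one and is consistent with how the paper uses the result: differentiate the fixed-point identity of Premise~\ref{prem_D2} and invert $\Id-J_x\Phi(\avx(\theta),\theta)$ via Premise~\ref{prem_D4} to obtain \eqref{eq:deriv-limit}, then run the perturbed linear recursion $e_{k+1}=J_x\Phi_k(x_k(\theta),\theta)\,e_k+r_k$ with $\normm{r_k}\to 0$ and a norm in which the iteration matrices are eventually $\rho$-contractive; this is essentially the same decomposition the paper deploys later in Section~\ref{prf:thm:conv-rat-der}, so your argument dovetails with the rest of the analysis. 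Two small caveats, both at the same level of informality as the paper's own premises: (i) Premise~\ref{prem_D3} only asserts $J_x\Phi_k\to J_x\Phi$ without specifying the mode of convergence, and your step $J_x\Phi_k(x_k(\theta),\theta)\to J_x\Phi(\avx(\theta),\theta)$ requires locally uniform convergence (or equicontinuity) near $\avx(\theta)$ rather than mere pointwise convergence --- the paper makes the identical silent assumption in its proof of Theorem~\ref{thm:conv-rat-der}; (ii) a pointwise limit of $C^1$ maps need not be $C^1$, so the differentiability of $\Phi$ should be read as implicit in Premise~\ref{prem_D3} (which presupposes that $J_x\Phi$ and $J_\theta\Phi$ exist) rather than deduced from Premise~\ref{prem_D1} as your parenthetical suggests. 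Neither caveat affects the validity of your argument under the intended reading of the premises, and the obstacle you flag (transferring the spectral-radius bound to an operator-norm bound along the orbit) is handled correctly by the standard norm-construction lemma.
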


\section{Proofs for the differentiation of inertial methods }\label{prf:ad-ima}

\subsection{Proof of Theorem~\ref{thm:gim-stable}}\label{prf:thm:gim-stable}
\begin{proof}
Let first notice that from Lemma~\ref{lem:def-inf},  $\calA$ is an infinite iterative process. Therefore, the proof of Theorem~\ref{thm:gim-stable} will consist in applying Theorem~\ref{thm:deriv-stable}. To achieve this goal, we will prove that our infinite iterative process $\calA$  satisfies Premise~\ref{prem_apend}. 
\paragraph{Part 1} \label{thm:gim-stable-part1}$~$  We recall that
\[
	F_k(X,\theta)= F_k(x,y,\theta)=\begin{pmatrix}
\yak(x,y)-\gak\nabla f(\ybk(x,y),\theta)\\
x
\end{pmatrix}.
\]
Using Lemma~\ref{lem:LipFk}, for all $k\in\bbN, F_k$ is a continuously differentiable function. By  Premise~\ref{prem_C1}  $\ak\to a$, $\bk\to b$ and $\gak \to \gamma$ thus $\yak\pa{x,y}\to y_{a}\pa{x,y}\eqdef x+a\pa{x-y}$  and $\ybk\pa{x,y}\to y_{b}\pa{x,y}\eqdef x+b\pa{x-y}$. This implies that $\yak(x,y)-\gak\nabla f(\ybk(x,y),\theta)\to y_a(x,y)-\gamma\nabla f(y_b(x,y),\theta)$. Consequently, we get that $\forall \pa{X,\theta}=(x,y,\theta)\in\bbR^{n}\times\bbR^{n}\times\Theta$,
\[
F_k(X,\theta)\to F(X,\theta)= \begin{pmatrix}
y_a(x,y)-\gamma\nabla f(y_b(x,y),\theta)\\
x
\end{pmatrix},
\]
which prove Premise~\ref{prem_D1}.
\paragraph{Part 2}\label{thm:gim-stable-part2}$~$
Thanks to the global convergence of the iterates generated by our scheme  Proposition~\ref{pro:conv-iters} and Remark~\ref{rem:conv-iters}, the sequence $\seq{X_k}$ converges to $X^{\star}(\theta)=\begin{pmatrix}\xsol(\theta)\\\xsol(\theta)\end{pmatrix}.$ Hence, we have 
\begin{align*}
 F(X^{\star}(\theta),\theta)= \begin{pmatrix}
y_a(\xsol(\theta),\xsol(\theta))-\gamma\nabla f(y_b(\xsol(\theta),\xsol(\theta)),\theta)\\
\xsol(\theta)
\end{pmatrix}= \begin{pmatrix}
\xsol(\theta)-\gamma\nabla f(\xsol(\theta),\theta)\\
\xsol(\theta)
\end{pmatrix}=X^{\star}(\theta),
\end{align*}
where we have used the optimality condition $\nabla f(\xsol(\theta),\theta)=0,$ which prove Premise~\ref{prem_D2}.
\paragraph{Part 3}\label{thm:gim-stable-part3}$~$ For any $(X,\theta)=(x,y,\theta)\in\bbR^{n}\times\bbR^{n}\times\Theta$ we have that 
\[
	 	J_1F_k(X,\theta)= \begin{pmatrix} (1+\ak)\Id-\gak(1+\bk)\nabla_x^2f(\ybk(x,y),\theta) &-\ak\Id+\gak\bk\nabla_x^2f(\ybk(x,y),\theta) \\ \Id  & 
		0\end{pmatrix}
\]
and 
\[
		 	J_2F_k(X\pa{\theta},\theta)= \begin{pmatrix} -\gak\nabla_{x\theta}^2f(\ybk(x,y),\theta)  \\  
		0\end{pmatrix}.
\]
Similarly to Part 1, we use Premise~\ref{prem_C1} and we have that $\ak\to a$, $\bk\to b$ and $\gak \to \gamma$. Since $a,b\in[0,1]^2$ and $\gamma\in]0,2/L[$ thus  $(1+\ak)\to(1+a)$ , $\gak(1+\bk)\to\gamma(1+b)$ and $\gak\bk\to\gamma b$. This implies that
\[
 (1+\ak)\Id-\gak(1+\bk)\nabla_x^2f(\ybk(x,y),\theta)\to  (1+a)\Id-\gamma(1+b)\nabla_x^2f(y_b(x,y),\theta),
\]

\[
-\ak\Id+\gak\bk\nabla_x^2f(\ybk(x,y),\theta)\to -a\Id+\gamma b\nabla_x^2f(y_b(x,y),\theta),
\]
and 
\[
-\gak\nabla_{x\theta}^2f(\ybk(x,y),\theta)\to-\gamma\nabla_{x\theta}^2f(y_b(x,y),\theta).
\]
Consequently, we get  
\[
J_1F_k(X,\theta)\to J_1F(X,\theta)= \begin{pmatrix} (1+a)\Id-\gamma(1+b)\nabla_x^2f(y_b(x,y),\theta) &-a\Id+\gamma b\nabla_x^2f(y_b(x,y),\theta)\\ \Id  & 
		0\end{pmatrix}
\]
and 
\[
J_2F_k(X\pa{\theta},\theta)\to J_2F(X\pa{\theta},\theta)  \begin{pmatrix} -\gamma\nabla_{x\theta}^2f(y_b(x,y),\theta)  \\  
		0\end{pmatrix},
\]
 which prove the Premise~\ref{prem_D3}. 
 
 \paragraph{Part 4}\label{thm:gim-stable-part4}$~$
For the last part, we applied Proposition~\ref{pro:local-lin}-\ref{pro:local-lin2}, since premise~\ref{prem_A}, \ref{prem_B}, \ref{prem_C} and the strong convexity hypothesis with respect to $x$ hold, we get that  the spectral radius of $M=J_1F(X^{\star},\theta)$ is such that $\rho(M)<1.$
 
 \paragraph{Conclusion} $~$
 From Part 1, 2, 3 and 4 we applied  Theorem~\ref{thm:deriv-stable} to get that the sequence $\seq{X_k}$ is derivative stable. Moreover we have  by \eqref{eq:deriv-limit}  the derivative limit is written as $\forall\theta\in\Theta,$
 \begin{align*}
 \partial_{\theta}X^{\star}\pa{\theta}&=\Ppa{\Id-J_1F(X^{\star}\pa{\theta},\theta)}^{-1}J_2F(X^{\star}\pa{\theta},\theta),\\ 
 &=\begin{pmatrix} -a\Id+\gamma(1+b)\nabla_x^2f(\xsol(\theta),\theta) &a\Id-\gamma b\nabla_x^2f(\xsol\pa{\theta},\theta) \\ -\Id  & 
		\Id\end{pmatrix}^{-1}\hspace{-0.2cm}\begin{pmatrix} -\gamma\nabla_{x\theta}^2f(\xsol,\theta)  \\  
		0\end{pmatrix},
 \end{align*}
 which concludes the proof of the Theorem.
\end{proof}

\subsection{Proof  of Theorem~\ref{thm:conv-rat-der}}\label{prf:thm:conv-rat-der}

\begin{proof}
Let us first recall that by the rule of derivation of composed functions, we have the formula \eqref{eq:pdcf-rule} 
\[
 \forall k\in\bbN, \quad \partial_{\theta}X_{k+1}(\theta)=J_1F_k(X_k(\theta),\theta)\partial_{\theta}X_k(\theta)+J_2F_k(X_k(\theta),\theta).
\]
Thus on the limit, we have that 
\[
\partial_{\theta}X^{\star}(\theta)=J_1F(X^{\star}(\theta),\theta)\partial_{\theta}X^{\star}(\theta)+J_2F(X^{\star}(\theta),\theta).
\]
This yields to the following $\forall k\in\bbN,$
\begin{align*}
\partial_{\theta}X_{k+1}(\theta)-\partial_{\theta}X^{\star}(\theta)=&J_1F_k(X_k(\theta),\theta)\partial_{\theta}X_k(\theta)+J_2F_k(X_k(\theta),\theta)-J_1F(X^{\star}(\theta),\theta)\partial_{\theta}X^{\star}(\theta)\\
&-J_2F(X^{\star}(\theta),\theta),\\
=&\BPa{J_1F_k(X_k(\theta),\theta)+J_1F(X^{\star}(\theta),\theta)}\Ppa{\partial_{\theta}X_k(\theta)-\partial_{\theta}X^{\star}(\theta)}\\
&+\BPa{J_2F_k(X_k(\theta),\theta)-J_2F(X^{\star}(\theta),\theta)}+\calE_k,
\end{align*}
where we set 
\[
\calE_k\eqdef J_1F_k(X_k(\theta),\theta)\partial_{\theta}X^{\star}\pa{\theta}-J_1F(X^{\star}(\theta),\theta)\partial_{\theta}X_k\pa{\theta}.
\]
Henceforth, we have the following bound 
\begin{align*}
\normm{\partial_{\theta}X_{k+1}(\theta)-\partial_{\theta}X^{\star}(\theta)}\leq&\normm{J_1F_k(X_k(\theta),\theta)+J_1F(X^{\star}(\theta),\theta)}\normm{\partial_{\theta}X_k(\theta)-\partial_{\theta}X^{\star}(\theta)}\\
&+\normm{J_2F_k(X_k(\theta),\theta)-J_2F(X^{\star}(\theta),\theta)}+\normm{\calE_k},\\
\leq&\BPa{\normm{J_1F_k(X_k(\theta),\theta)}+\normm{J_1F(X^{\star}(\theta),\theta)}}\normm{\partial_{\theta}X_k(\theta)-\partial_{\theta}X^{\star}(\theta)}\\
&+\normm{J_2F_k(X_k(\theta),\theta)-J_2F(X^{\star}(\theta),\theta)}+\normm{\calE_k},
\end{align*}
It remains to properly bound each term of the previous inequality. 

Let us recall that $\forall k\in\bbN, J_1F_k(X_k(\theta),\theta)$ is a continuous operator since $\forall k\in \bbN, F_k$ is $C^1-$smooth. Moreover, we have that $X_k(\theta)\to X^{\star}(\theta)$ and we have prove in Part~\ref{thm:gim-stable-part3} the sequence $J_1F_k(\cdot,\cdot)\to J_1F(\cdot,\cdot)$ which implies that $J_1F_k(X_k(\theta),\theta)\to J_1F(X^{\star}(\theta),\theta)$ hence their exists $K_1$ large enough such that we have
\[
\normm{J_1F_k(X_k(\theta),\theta)}\leq\normm{J_1F(X^{\star}(\theta),\theta)}+\eps_1,
\]
An analogous reasoning yields to the fact that their exists $K_2$ large enough such that
\[
\normm{J_2F_k(X_k(\theta),\theta)-J_2F(X^{\star}(\theta),\theta)}\leq \eps_2,
\]

 Let us consider now $\normm{\calE_k}$, we have 
\begin{align*}
\normm{\calE_k}&=\normm{ J_1F_k(X_k(\theta),\theta)\partial_{\theta}X^{\star}\pa{\theta}-J_1F(X^{\star}(\theta),\theta)\partial_{\theta}X_k\pa{\theta}},
\end{align*}
 we get  that $\normm{\calE_k}\to0.$ This means that their exits $K_3$ large enough such that 
 \[
 \normm{\calE_k}\leq\eps_3, \forall k\geq K_3. 
 \]
By summing everything up, taking $K=\max\Ba{K_1,K_2,K_3}$ and $\eps=\min\Ba{\eps_1,\eps_2,\eps_3}$ we obtain that for all $k\geq K$ we have
\begin{align*}
\normm{\partial_{\theta}X_{k+1}(\theta)-\partial_{\theta}X^{\star}(\theta)}\leq&2\normm{J_1F(X^{\star}(\theta),\theta)}\normm{\partial_{\theta}X_k(\theta)-\partial_{\theta}X^{\star}(\theta)}+\eps\Ppa{2+\normm{\partial_{\theta}X^{\star}(\theta)}},\\
\leq&2\rho\pa{M}\normm{\partial_{\theta}X_k(\theta)-\partial_{\theta}X^{\star}(\theta)}+\eps\Ppa{2+\normm{\partial_{\theta}X_k(\theta)-X^{\star}(\theta)}},
\end{align*}
this concludes the  proof of this Theorem. 

\end{proof}

	\end{appendices}

\begin{acknowledgments}
The  author would like to thank Samuel Vaiter for his helpful comments and French National Research  ANR for funding the PEPR PDE-AI Grant.
\end{acknowledgments}

\bibliographystyle{plain}
{
\bibliography{biblio}
}

\end{document}